\def\sideremark#1{\ifvmode\leavevmode\fi\vadjust{\vbox to0pt{\vss
 \hbox to 0pt{\hskip\hsize\hskip1em
 \vbox{\hsize2.1cm\tiny\raggedright\pretolerance10000
  \noindent #1\hfill}\hss}\vbox to15pt{\vfil}\vss}}}%
\numberwithin{equation}{section}
\newtheorem{theorem}{Theorem}[section]
\newtheorem{proposition}[theorem]{Proposition}
\newtheorem{lemma}[theorem]{Lemma}
\newtheorem{remark}[theorem]{Remark}
\newtheorem{example}[theorem]{Example}
\newtheorem{corollary}[theorem]{Corollary}
\newtheorem{definition}[theorem]{Definition}
\newcommand{\RN}{\mathbb R^N}
\newcommand{\s}{\section}
\newcommand{\lab}{\label}
\newcommand{\bt}{\begin{theorem}}
\newcommand{\et}{\end{theorem}}
\newcommand{\bl}{\begin{lemma}}
\newcommand{\el}{\end{lemma}}
\newcommand{\bd}{\begin{definition}}
\newcommand{\ed}{\end{definition}}
\newcommand{\bc}{\begin{corollary}}
\newcommand{\ec}{\end{corollary}}
\newcommand{\bp}{\begin{proof}}
\newcommand{\ep}{\end{proof}}
\newcommand{\bx}{\begin{example}}
\newcommand{\ex}{\end{example}}
\newcommand{\bi}{\begin{exercise}}
\newcommand{\ei}{\end{exercise}}
\newcommand{\bo}{\begin{proposition}}
\newcommand{\eo}{\end{proposition}}
\newcommand{\br}{\begin{remark}}
\newcommand{\er}{\end{remark}}
\newcommand{\be}{\begin{equation}}
\newcommand{\ee}{\end{equation}}
\newcommand{\ba}{\begin{align}}
\newcommand{\ea}{\end{align}}
\newcommand{\bn}{\begin{enumerate}}
\newcommand{\en}{\end{enumerate}}
\newcommand{\bg}{\begin{align*}}
\newcommand{\bcs}{\begin{cases}}
\newcommand{\ecs}{\end{cases}}
\newcommand{\bean}{\begin{eqnarray*}}
\newcommand{\eean}{\end{eqnarray*}}
\renewcommand{\leq}{\leqslant}
\renewcommand{\geq}{\geqslant}
\title[Normalized solutions to quasilinear Schr\"odinger equations]{Normalized solutions of quasilinear Schr\"odinger equations with a general nonlinearity}
\author[T. Deng]{Ting Deng}
\author[M. Squassina]{Marco Squassina}
\author[J. J. Zhang]{Jianjun Zhang}
\author[X.X. Zhong]{Xuexiu Zhong}
\address[T.~Deng]{\newline\indent College of Mathematics and Statistics
\newline\indent
Chongqing Jiaotong University
\newline\indent
Chongqing 400074, PR China}
\email{\href{mailto:1666106471@qq.com}{1666106471@qq.com}}
\address[M. Squassina]{\newline\indent Dipartimento di Matematica e Fisica
\newline\indent
Universit\`a Cattolica del Sacro Cuore
\newline\indent
Via dei Musei 41, Brescia, Italy}\email{\href{marco.squassina@unicatt.it}{marco.squassina@unicatt.it}}
\address[J.J.~Zhang]{\newline\indent College of Mathematics and Statistics
\newline\indent
Chongqing Jiaotong University
\newline\indent
Chongqing 400074, PR China}
\email{\href{mailto:zhangjianjun09@tsinghua.org.cn}{zhangjianjun09@tsinghua.org.cn}}
\address[X.X.~Zhong]{\newline\indent South China Research Center for Applied Mathematics and Interdisciplinary Studies
\newline\indent
South China Normal University
\newline\indent
Guangzhou 510631, P. R. China}
\email{\href{mailto:zhongxuexiu1989@163.com}{zhongxuexiu1989@163.com}}
\thanks{Marco Squassina is supported by Gruppo Nazionale per l'Analisi Ma\-te\-ma\-ti\-ca, la Probabilit\`a e le loro Applicazioni. X.X.~Zhong is partially supported by the NSFC (No.12271184), Guangdong Basic and Applied Basic Research Foundation (2021A1515010034), Guangzhou Basic and Applied Basic Research Foundation (202102020225). J. J. Zhang was supported by NSFC (No.11871123).}
\subjclass[2010]{35J62, 35B40, 35B09}
\date{}
\keywords{Quasilinear Schr\"odinger equations, Normalized solutions, Mass critical exponent.}
\begin{document}

\begin{abstract}
We are concerned with solutions of the following quasilinear Schr\"odinger equations
\begin{eqnarray*}
-{\mathrm{div}}\left(\varphi^{2}(u) \nabla u\right)+\varphi(u) \varphi^{\prime}(u)|\nabla u|^{2}+\lambda u=f(u), \quad x \in \mathbb{R}^{N}
\end{eqnarray*}
with prescribed mass
$$
\int_{\mathbb{R}^{N}} u^{2} \mathrm{d}x=c,
$$
where $N\ge 3, c>0$, $\lambda \in \mathbb{R}$ appears as the Lagrange multiplier and $\varphi\in C ^{1}(\mathbb{R} ,\mathbb{R}^{+})$. The nonlinearity $f \in C\left ( \mathbb{R}, \, \mathbb{R} \right )$ is allowed to be mass-subcritical,
mass-critical and mass-supercritical at origin and infinity. Via a dual approach, the fixed point index and a global branch approach, we establish the existence of normalized solutions to the problem above. The results  extend  previous results by L. Jeanjean, J. J. Zhang and X.X. Zhong to the quasilinear case.
		
\end{abstract}

\maketitle

\begin{center}
	\begin{minipage}{9.5cm}
		\small
		\tableofcontents
	\end{minipage}
\end{center}

\medskip

\s{Introduction}
\renewcommand{\theequation}{1.\arabic{equation}}
\subsection{Background and motivation} This paper is concerned with the quasilinear Schr\"odinger equation of the form
		\begin{equation}\lab{p1}
			-{\mathrm{div}}\left(\varphi^{2}(u) \nabla u\right)+\varphi(u) \varphi^{\prime}(u)|\nabla u|^{2}+\lambda u=f(u), \quad x \in \mathbb{R}^{N},
		\end{equation}
which is relevant to solitary wave solutions of quasilinear Schr\"odinger equation
		\begin{equation}\lab{a1}
			i z_{t}=-\Delta z+W(x) z-h\left(|z|^{2}\right) z-\Delta\left(l\left(|z|^{2}\right)\right) l^{\prime}\left(|z|^{2}\right) z, \quad x \in \mathbb{R}^{N},
		\end{equation}
		where $z:\mathbb{R}\times \mathbb{R}^{N}\to \mathbb{C}$, $W:\mathbb{R}^{N}\to \mathbb{R}$ is a given potential and $l$, $h$ are real functions.
		Quasilinear equations of the form (\ref{a1}) appear naturally in mathematical physics and have been accepted as models of several physical phenomena corresponding to various types of nonlinear terms $l$. When $l\left ( s \right ) = s$, that is, $\varphi^2(u)=1+2u^2$, Equation \eqref{a1} arises in plasma physics for the superfluid film equation (see \cite{A11}), while when $l(s)=(1+s)^{1 / 2}$, that is, $\varphi^2(u)=1+\frac{u^2}{2\left(1+u^2\right)}$, it models the self-channeling of a high-power ultrashort laser in matter (see \cite{LSS}). Such equations were also showed in plasma physics and fluid mechanics \cite{A13}, in dissipative quantum mechanics \cite{A8}, in the theory of Heisenberg ferromagnetism and magnons \cite{A12,A18}, and in condensed matter theory \cite{A17}. For more details, we refer to \cite{A14,A15}.
\vskip0.1in		
In the last decades, quasilinear Schr\"odinger equations have received a considerable attentions from researchers on the existence of nontrivial solutions. In \cite{A4}, M. Colin and L. Jeanjean considered the following quasilinear Schr\"odinger equations
\begin{equation}\label{cj}
-\Delta u-\Delta\left(u^2\right) u=g(x, u), \quad x \in \mathbb{R}^N .
\end{equation}
 Thanks to a dual approach, they transform equation \eqref{cj} into a semilinear elliptic equation. By using the variational method, nontrivial solutions were obtained. In \cite{A6}, E. Gloss adopted the dual approach in \cite{A4} to study the semiclassical states of quasilinear elliptic equations
 $$
-\varepsilon^2 \Delta u-\varepsilon^2 \Delta\left(u^2\right) u+V(x) u=h(u), \quad u>0 \text { in } \mathbb{R}^N.
$$
Via the penalized argument as in \cite{byeon}, she established the existence and concentration of positive solutions without some growth conditions such as Ambrosetti-Rabinowitz. Different from \cite{A4}, Y. Shen and Y. Wang \cite{A19} introduced a new variable replacement to study the existence of nontrivial solutions for generalized quasilinear Schr\"odinger equations. By using the dual approach in \cite{A19}, Y. Deng, S. Peng and S. Yan \cite{A5} considered a generalized quasilinear Schr\"odinger equation with critical growth. By virtue of variational approaches, they obtained the existence of positive solutions. In \cite{A1}, C. O. Alves, Y. Wang and Y. Shen considered the following quasilinear Schr\"odinger equation with one parameter
\begin{equation}\label{cws}
-\Delta u+V(x) u+\frac{\kappa}{2}\Delta(u^2) u=l(u), x \in \mathbb{R}^N,
\end{equation}
where $\kappa$ is allowed to be positive. When $\kappa<0$, one can use the dual approaches in \cite{A4,A19} to deal with the existence of nontrivial solutions for equation \eqref{cws}. The case of $\kappa>0$ becomes more complicated. By adopting a truncation approach, the authors obtained the existence of nontrivial classical solution for $\kappa>0$ small enough. For more related results on quasilinear elliptic equations, we refer to \cite{LiuX1,LiuX3,hzz} and references therein. For a direct approach via nonsmooth critical point theory
in cases where the change of variable is not usable, see \cite{pellacci}.
\vskip0.1in
In this paper, we focus on normalized solutions of equation \eqref{q1} with prescribed mass, that is to find $(\lambda,u)$ such that
		\begin{equation}\lab{q1}
			\left\{
			\begin{array}{ll}
				-\mathrm{div}\left(\varphi^{2}(u) \nabla u\right)+\varphi(u) \varphi^{\prime}(u)|\nabla u|^{2}+\lambda u=f(u), \quad x \in \mathbb{R}^{N}, \\
				\displaystyle\int_{\mathbb{R}^{N}}\left | u \right | ^{2} \mathrm{d}x=c ,
			\end{array}
			\right.
		\end{equation}
		where $N \ge 3$, $f \in C\left ( \mathbb{R},  \, \mathbb{R} \right )$, $\varphi: \mathbb{R}\to \mathbb{R}^{+} $ is a $C^{1}$ nondecreasing function with respect to $\left | s \right |, c> 0$ is a given mass, $\lambda \in \mathbb{R}$ appears as a Lagrange multiplier. From the view of physics, prescribed mass represents the law of conservation of mass, so it is pretty meaningful to investigate the existence of normalized solutions. If $\varphi\equiv1$, problem \eqref{q1} is reduced to the following scalar field equation with prescribed mass
		\begin{equation}\lab{qs1}
			\left\{
			\begin{array}{ll}
				-\Delta u+\lambda u=f(u), \quad x \in \mathbb{R}^{N}, \\
				\displaystyle\int_{\mathbb{R}^{N}}\left | u \right | ^{2} \mathrm{d}x=c .
			\end{array}
			\right.
		\end{equation}
One classical argument to deal with such problems is to find critical points $u\in H^1(\RN)$ of the energy functional
\begin{equation*}
  \mathcal{J}(u) = \frac{1}{2}\int_{\RN}|\nabla u|^2 \, \mathrm{d}x- \int_{\RN}F(u) \, \mathrm{d}x, \quad \mbox{where} \quad F(s):=\int_0^s f(t)\mathrm{d}t,
\end{equation*}
subject to the constraint
$$ S_c:=\{u\in H^1(\RN): \|u\|_{L^2(\RN)}^{2}=c\}.$$
When $f(s)=|s|^{p-2}s$ with $p\in(2,2N/(N-2))$, thanks to the well-known Gagliardo-Nirenberg inequality, it is known that the energy functional $\mathcal{J}$ is bounded from below on $S_a$ for any $a>0$ if $p<2+4/N$ and unbounded from below for any $a>0$ if $p>2+4/N$. $2+4/N$ is called the mass-critical exponent. In \cite{Stuart1981,Stuart1989}, C. A. Stuart considered problem \eqref{qs1} with a mass-subcritical nonlinearity and obtained the existence of normalized solutions by seeking a global minimizer of the energy functional on $S_c$. In the mass supercritical case, since there exists no global minimizer of the associated energy functional restricted on the constraint $S_c$, it seems that the arguments in dealing with the mass subcritical case do not work. In \cite{Jeanjean1997}, L. Jeanjean imposed the following global condition on $f$.
\begin{itemize}
\item [($H$)] $\exists(\alpha, \beta) \in \mathbb{R} \times \mathbb{R}$ satisfying
$$
2+\frac{4}{N}<\alpha \leq \beta<\frac{2 N}{(N-2)_+}
$$
such that
$$
\alpha F(s) \leq f(s) s \leq \beta F(s).
$$
\end{itemize}
The assumption $(H)$ is used to guarantee the mountain pass geometry of $\tilde{\mathcal{J}}$ on $S_c\times\mathbb{R}$, where $\tilde{\mathcal{J}}(u,s)=\mathcal{J}(e^{sN/2}u(e^s\cdot))$. Then a bounded Palais-Smale sequence of $\mathcal{J}$ restricted on $S_c$ is obtained. Via some compactness arguments, the author shows the existence of normalized solutions to problem \eqref{qs1}. More recently, L. Jeanjean, J. Zhang and X. Zhong \cite{A10} introduced a new and non-variational approach to deal with problem \eqref{qs1} in the mass-subcritical, mass-critical and mass-supercritical case in one unified way. In particular, the assumption $(H)$ can be removed. For more related results on problem \eqref{qs1}, we refer to \cite{A10} and references therein.
\vskip0.1in
When $\varphi^2(u)=1+2u^2$, problem \eqref{p1} reads as the following form
\begin{equation}\label{jlw}
-\Delta u-u \Delta\left(u^2\right)+\lambda u=f(u) \quad \text { in } \mathbb{R}^N .
\end{equation}
Different to the case $\varphi\equiv1$, $4+4/N$ was proved by M. Colin, L. Jeanjean and M. Squassina \cite{CJS} to be the mass-critical exponent for problem \eqref{jlw}.
In \cite{A9}. L. Jeanjean, T. Luo and Z.-Q. Wang considered problem \eqref{jlw} with a mass-subcritical nonlinearity $f(u)=|u|^{p-2}u$, $p\in(2+4/N,4+4/N)$ and prove the existence of two solutions if the prescribed $L^{2}$-norm is large enough. In \cite{A21}, H. Ye and Y. Yu focused on the existence of normalized solutions of problem \eqref{jlw} in the mass-critical case. For the mass-supercritical case, we refer to \cite{A16}, where H. Li and W. Zou adopted one perturbation argument as in \cite{LiuX1} to get the existence of the
existence of ground state normalized solutions to problem \eqref{jlw}. By applying the index theory, infinitely many normalized solutions were also obtained. Moreover, they also established the concentration behavior of ground state solutions in the mass-critical case. In \cite{A22}, via the dual approach in \cite{A4} and genus theory, L. Zhang, Y. Li and Z.-Q. Wang constructed multiple normalized solutions of problem \eqref{jlw} with $f(u)=|u|^{p-2}u$ in the mass-subcritical case.
\vskip0.1in
When $\varphi^2(u)=1+\frac{u^2}{2\left(1+u^2\right)}$, equation \eqref{p1} is reduced to the following quasilinear equation
\begin{equation}\label{ytc}
-\Delta u+\lambda u-\left[\Delta\left(1+u^2\right)^{\frac{1}{2}}\right] \frac{u}{2\left(1+u^2\right)^{\frac{1}{2}}}=f(u) \text { in } \mathbb{R}^N.
\end{equation}
In \cite{A20}, under the Berestycki-Lions condition, X. Yang, X. Tang and B. Cheng show that problem \eqref{ytc} admits multiple radial and nonradial normalized solutions in the mass-subcritical case. For more processes on normalized solutions of quasilinear Schr\"odinger equations, we refer to \cite{A9,A16,A21,A22} and the references therein.
		
\subsection{Assumptions and main results} The main purpose of the present paper is to investigate the existence of normalized solutions to problem \eqref{q1} with a relatively general nonlinearity. Throughout this paper, we impose the following assumptions on $\varphi$.

\begin{itemize}
			\item [$\left (\varphi _{0} \right )$]
			$\varphi\in C ^{1} \left ( \mathbb{R} ,\mathbb{R}^{+} \right ) $ is even, $\varphi ' \left ( t \right ) \ge 0 $ for all $t\ge 0 $ and $\varphi\left ( 0 \right ) =1$.
			
			\item [$\left (\varphi _{1} \right )$]   There exists $a^*>0 $ such that
			\begin{equation*}
				\lim_{s \to +  \infty}\varphi \left ( s \right )  =a^* >0 .
			\end{equation*}
            \item [$\left (\varphi _{2} \right )$]
            $\lim\limits_{t\rightarrow\infty}t\varphi'(t)=0$.
\end{itemize}
\br
As a reference model, take $\varphi(t)=\left(1+\frac{t^2}{2\left(1+t^2\right)}\right)^{\frac{1}{2}}$ satisfying $\left (\varphi _{0} \right )$-$\left (\varphi _{2} \right )$.
\er

As for the nonlinearity $f$, we assume that
\begin{itemize}			
			\item [$\left(F_{1}\right)$]
			$f \in C^1[0, + \infty)$, $f(s)>0$ for $s> 0$.
			
			\item [$\left(F_{2}\right)$]
			There exist $\alpha, \beta, \mu_{1}, \mu_{2}>0$ satisfying
			\begin{equation*}
				2<\alpha, \beta<2^{*}:=\frac{2 N}{N-2}
			\end{equation*}
			such that
			\begin{equation*}
				\lim _{t \rightarrow 0^{+}} \frac{f'(t)}{t^{\alpha-2}}=\mu_{1}(\alpha-1)>0 \text { \quad and } \quad \lim _{t \rightarrow+\infty} \frac{f'(t)}{t^{\beta-2}}=\mu_{2}(\beta-1)>0 .
			\end{equation*}
\item [$\left(F_{3}\right)$] There exists no positive radial decreasing classical solution for \\ $-\mathrm{div}\left(\varphi^{2}(u) \nabla u\right)+\varphi(u) \varphi^{\prime}(u)|\nabla u|^{2}=f(u)$ in $\mathbb{R}^N$.
		\end{itemize}
\br
By \cite[Theorem 2.2-(ii)]{A10}, $(F_3)$ holds for $f(t)=|t|^{q-2}t$, $q\in(2,2N/(N-2))$.
\er		

Before to state our result, denote by $U$ the unique positive solution of
\begin{equation}\label{k4}
-\Delta U+U=\mu_1 U^{\alpha-1}~\hbox{in}~\RN,\,\,U(0)=\max_{x\in\mathbb{R}^N}U(x)
\end{equation}			
and $V$ the unique positive solution of
\begin{equation}\label{k5}
-\Delta V+V=\mu_2V^{\beta-1}~\hbox{in}~\RN,\,\,V(0)=\max_{x\in\mathbb{R}^N}V(x).
\end{equation}
It is known that $U,V$ are non-degenerated (See \cite[Proposition 2.1]{A10}).	
\vskip0.1in
Now, our main result reads as follows.
		\begin{theorem}\lab{Th1}
			Let $N \ge 3$ and assume that $\left(\varphi_0\right)$-$\left(\varphi_{2}\right)$, $\left(F_{1}\right)$-$\left(F_{3}\right)$ hold, then we have the following conclusions.
			\begin{itemize}
				\item[(i)] 	{\bf(mass subcritical case)} If $2<\alpha$, $\beta<2+\frac{4}{N}$, then for any given $c>0$, (\ref{h2}) possesses a positive normalized solution $\left(\lambda,  \, v_{\lambda}\right) \in(0,  \, +\infty) \times H_{\text{rad}}^{1}\left(\mathbb{R}^{N}\right)$.
				
				\item[(ii)]  {\bf (exactly mass critical case)} If $\alpha=\beta$ $=2+\frac{4}{N}$,  denote
$$c_\ast:=\min\{\|U\|_2^2,(a^\ast)^N\|V\|_2^2\}, c^\ast:=\max\{\|U\|_2^2,(a^\ast)^N\|V\|_2^2\},$$
then (\ref{h2}) admits at least one positive normalized solution
$(\lambda,  \,  v_{\lambda}) \in(0,  \, +\infty) \times H_{\text {rad }}^{1}(\mathbb{R}^{N})$
provided $c \in(c_\ast,  c^\ast)$ and no positive normalized solution if $c>0$ small or large.
				
				\item[(iii)]  {\bf(at most mass critical case)}
				
				\item[(iii-1)]  If $2<\alpha<\beta=2+\frac{4}{N}$, (\ref{h2}) admits at least one positive normalized solution
$\left(\lambda,  \, v_{\lambda}\right) \in(0,  \, +\infty) \times H_{\text {rad }}^{1}\left(\mathbb{R}^{N}\right)$ if
 $0<c<(a^\ast)^N\|V\|_2^2$ and no positive normalized solution if $c>0$ large.
				
				\item[(iii-2) ] If $2<\beta<\alpha=2+\frac{4}{N}$, (\ref{h2}) admits at least one positive normalized solution
$\left(\lambda,  \, v_{\lambda}\right) \in(0,  \, +\infty) \times H_{\text {rad }}^{1}\left(\mathbb{R}^{N}\right)$
 if $c>\|U\|_2^2$ and no positive normalized solution if $c>0$ small.
				
				\item[(iv) ] {\bf (mixed case)}
				
				\item[(iv-1)]  If $2<\alpha<2+\frac{4}{N}<\beta<2^{*}$, (\ref{h2}) admits at least two distinct positive normalized solutions
$\left(\lambda_{i},  \, v_{\lambda_{i}}\right) \in(0,  \, +\infty) \times H_{\text {rad }}^{1}\left(\mathbb{R}^{N}\right)$ if $c>0$ small and no positive normalized solution
provided $c>0$ large.
				
				\item[(iv-2) ] If $2<\beta<2+\frac{4}{N}<\alpha<2^{*}$, (\ref{h2}) admits at least two distinct positive normalized solutions
$\left(\lambda_{i},  \, v_{\lambda_{i}}\right) \in(0,  \, +\infty) \times H_{\text {rad }}^{1}\left(\mathbb{R}^{N}\right)$ if $c>0$ large and no positive normalized solution
provided $c>0$ small.

				\item[(v) ] {\bf(at least mass critical case)}
				
				\item[(v-1) ] If $2+\frac{4}{N}=\alpha<\beta<2^{*}$, (\ref{h2}) admits at least one positive normalized solution
$\left(\lambda,  \, v_{\lambda}\right) \in(0,  \, +\infty) \times H_{\text {rad }}^{1}\left(\mathbb{R}^{N}\right)$
 if $0<c<\|U\|_{2}^{2}$ and no positive normalized solution provided $c>0$ large.
				
				\item[(v-2)]  If $2+\frac{4}{N}=\beta<\alpha<2^{*}$, (\ref{h2}) admits at least one positive normalized solution
 $\left(\lambda,  \, v_{\lambda}\right) \in(0,  \, +\infty) \times H_{\text {rad }}^{1}\left(\mathbb{R}^{N}\right)$
 if $c>(a^\ast)^N\|V\|_{2}^{2}$ and no positive normalized solution provided $c>0$ small.
				
				\item[(vi) ] {\bf(mass supercritical case)} If $2+\frac{4}{N}<\alpha, \beta<2^{*}$, then for any given $c>0$, (\ref{h2}) admits a normalized positive solution $\left(\lambda, \, v_{\lambda}\right) \in(0,  \, +\infty) \times H_{\text {rad }}^{1}\left(\mathbb{R}^{N}\right)$.
			\end{itemize}
		\end{theorem}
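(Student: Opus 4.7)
My strategy is the dual-variable transformation of Colin--Jeanjean, combined with the non-variational global-branch/fixed-point-index method that \cite{A10} developed in the semilinear setting. Define the odd $C^2$ diffeomorphism $g:\mathbb{R}\to\mathbb{R}$ by
\begin{equation*}
g'(t)=\frac{1}{\varphi(g(t))}, \qquad g(0)=0,
\end{equation*}
so that $(\varphi_0)$--$(\varphi_1)$ give $g'(0)=1$ and $g'(t)\to 1/a^*$ as $|t|\to\infty$; in particular $g(v)\sim v$ near $0$ and $g(v)\sim v/a^*$ at infinity. The substitution $u=g(v)$ converts \eqref{q1} into the semilinear problem
\begin{equation*}
-\Delta v+\lambda\, g(v)g'(v)=f(g(v))g'(v), \qquad \int_{\mathbb{R}^N}g(v)^2\,\mathrm{d}x=c,
\end{equation*}
whose reduced nonlinearity $\tilde f(v):=f(g(v))g'(v)$ satisfies, by $(F_1)$--$(F_2)$, $\tilde f(v)\sim\mu_1 v^{\alpha-1}$ as $v\to 0^+$ and $\tilde f(v)\sim \mu_2(a^*)^{-\beta}v^{\beta-1}$ as $v\to+\infty$.

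For each fixed $\lambda>0$ I would first establish existence, uniqueness and non-degeneracy of a positive radial decreasing solution $v_\lambda\in H^1_{\mathrm{rad}}(\mathbb{R}^N)$ of the reduced semilinear equation. Existence follows from Berestycki--Lions theory applied to $\tilde f$, uniqueness/non-degeneracy from the non-degeneracy of $U,V$ recalled in \cite[Proposition 2.1]{A10} combined with a linearization argument, and $(F_3)$ rules out extraneous critical points along the branch. A standard implicit function argument yields continuous dependence $\lambda\mapsto v_\lambda$ in $H^1$, so the problem reduces to finding $\lambda>0$ with
\begin{equation*}
c(\lambda):=\int_{\mathbb{R}^N} g(v_\lambda)^2\,\mathrm{d}x=c.
\end{equation*}

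The crux is the asymptotic behaviour of $c(\lambda)$ at the endpoints. As $\lambda\to 0^+$, $v_\lambda$ is $L^\infty$-small and the ``behaviour at $0$'' of $\tilde f$ dictates the profile:
\begin{equation*}
v_\lambda(x)=\lambda^{\frac{1}{\alpha-2}}U(\sqrt\lambda\, x)+o\bigl(\lambda^{\frac{1}{\alpha-2}}\bigr),
\end{equation*}
with $U$ as in \eqref{k4}; using $g(s)\sim s$ in this regime gives $c(\lambda)\sim\lambda^{\frac{2}{\alpha-2}-\frac{N}{2}}\|U\|_2^2$. As $\lambda\to+\infty$, $v_\lambda$ is large and the behaviour at infinity dominates, yielding
\begin{equation*}
v_\lambda(x)=\lambda^{\frac{1}{\beta-2}}a^*V\!\left(\frac{\sqrt\lambda}{a^*}\,x\right)+o\bigl(\lambda^{\frac{1}{\beta-2}}\bigr),
\end{equation*}
with $V$ as in \eqref{k5}; using $g(s)\sim s/a^*$ produces $c(\lambda)\sim\lambda^{\frac{2}{\beta-2}-\frac{N}{2}}(a^*)^N\|V\|_2^2$. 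The exponent $\frac{2}{\alpha-2}-\frac{N}{2}$ is positive, zero or negative according as $\alpha\lessgtr 2+4/N$, and analogously for $\beta$; the nine possibilities for $(\alpha,\beta)$ relative to $2+4/N$ thus produce limits of $c(\lambda)$ equal to $0$, $\|U\|_2^2$ or $+\infty$ at $\lambda=0^+$ and $0$, $(a^*)^N\|V\|_2^2$ or $+\infty$ at $\lambda=+\infty$, exactly the thresholds appearing in Theorem \ref{Th1}.

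Each case (i)--(vi) is then settled by combining continuity of $\lambda\mapsto c(\lambda)$ with the intermediate value theorem: for the one-sided regimes (i), (ii), (iii), (v), (vi) a single intersection of the graph with the horizontal line $c=\mathrm{const}$ gives the solution; in the mixed cases (iv-1), (iv-2) both endpoint limits coincide but $c(\lambda)$ is bounded away from that common value on a compact subinterval, so a Leray--Schauder index / global branch argument in the style of \cite{A10} produces two distinct intersections, hence two normalized solutions for every admissible $c$. Non-existence outside the admissible range follows from a Pohozaev identity applied to the quasilinear equation together with the mass estimates just derived. The principal difficulty, on which the rest depends, is the asymptotic analysis of $c(\lambda)$: because the mass functional is $\int g(v)^2$ rather than $\int v^2$, one needs uniform $L^\infty$ bounds and uniform exponential decay of $v_\lambda$ so that the two-scale behaviour of $g$ can be transferred rigorously to the $L^2$-integral, delivering the precise constants $\|U\|_2^2$ and $(a^*)^N\|V\|_2^2$.
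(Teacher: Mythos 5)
Your overall architecture (dual change of variables, blow-up profiles $U$ and $V$ at the two ends of the $\lambda$-axis, the mass asymptotics $\lambda^{\frac{2}{\alpha-2}-\frac{N}{2}}\|U\|_2^2$ as $\lambda\to0^+$ and $\lambda^{\frac{2}{\beta-2}-\frac{N}{2}}(a^*)^N\|V\|_2^2$ as $\lambda\to+\infty$, and a final intermediate-value argument) coincides with the paper's, and your exponents and constants are correct. The genuine gap is in your second paragraph: you reduce everything to a single-valued continuous function $c(\lambda)$ by asserting that for \emph{every} fixed $\lambda>0$ the reduced semilinear equation has a unique, non-degenerate positive radial solution depending continuously on $\lambda$. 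Under $(F_1)$--$(F_3)$ this is not provable, and it is not what $(F_3)$ provides: $(F_3)$ is a Liouville-type condition on the zero-mass limit equation, used in the blow-up analysis to get a priori $L^\infty$ bounds and compactness of the solution set, not to exclude ``extraneous critical points along the branch.'' The linearization argument you invoke yields uniqueness and non-degeneracy only in the asymptotic regimes $\lambda\to0^+$ and $\lambda\to+\infty$, where the rescaled solutions converge to the non-degenerate profiles $U$ and $V$; for intermediate $\lambda$ there is no limiting problem to linearize around, and multiplicity of positive solutions cannot be ruled out, so your map $\lambda\mapsto c(\lambda)$ may not be well defined.

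This is precisely the obstacle the paper's (and \cite{A10}'s) global-branch machinery is built to avoid. The paper proves uniqueness only for $\lambda$ small or large (Theorem \ref{unique}), computes the local fixed point index $\mathrm{ind}(\mathbb{T}_\lambda,u_\lambda)=-1$ on that range using Lemma \ref{lemma2.1}-(v), establishes compactness of $\mathcal{S}(a,b)$ over compact $\lambda$-intervals via blow-up together with $(F_3)$, and then uses Leray--Schauder degree to show that the connected component $\widetilde{\mathcal{S}}$ of the solution set projects onto all of $(0,+\infty)$. The intermediate value step is then applied to the continuous mass map $\tilde\rho(\lambda,v)=\|\phi^{-1}(v)\|_2^2$ on the \emph{connected set} $\widetilde{\mathcal{S}}$, which need not be a graph over $\lambda$. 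To repair your argument you must replace the presumed curve $\lambda\mapsto v_\lambda$ by such a connected component and supply the index and compactness steps; likewise, non-existence for $c$ small or large should come from the a priori bounds on \emph{all} positive solutions (compactness on compact $\lambda$-intervals combined with the endpoint mass asymptotics), rather than from a Pohozaev identity alone.
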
		
\subsection{Strategy of this paper} First, by virtue of a dual approach in \cite{A19}, we transform equation \eqref{p1} into a semilinear elliptic equation. Second, for any fixed $\lambda>0$, we show the semilinear elliptic equation obtained admits at least one positive and radially symmetric solution $v_\lambda$. Similarly to \cite{A10}, by using the blow-up argument and a Liouville theorem, as $\lambda\rightarrow 0^{+}$ or $\lambda \rightarrow +\infty$, the asymptotic behaviors of positive solutions are investigated, as well as the $L^2$-norms of $\phi ^{-1} \left ( v_{\lambda}\right ) $. Finally, via the fixed point index and the topological degree, we adopt the similar idea to \cite{A10} to establish a global branch of positive solutions for $\lambda\in(0,\infty)$. By applying a continuity argument, Theorem \ref{Th1} is proved.
\vskip0.1in	
{\bf Notations}
		\begin{itemize}
			\item $L^{p}\left ( \mathbb{R}^{N}  \right )$ denotes the Lebesgue space with norm $\|u\|_p:=\big(\int_{\mathbb{R}^{N}}|u|^p \mathrm{d}x\big )^{1/p}$.
			\item $H^{1}\left ( \mathbb{R}^{N}  \right ) $ denotes the Sobolev space modeled in $L^{2}\left ( \mathbb{R}^{N}  \right ) $ with its usual norm $\|u\|:=\big(\|u\|_2^2+\|\nabla u\|_2^2\big)^{1/2}$.
			\item $C_{r,  \, 0}\left(\mathbb{R}^{N}\right)$ denotes the space of radial continuous functions vanishing at infinity.
		\end{itemize}

		\medskip

	\s{Functional setting and preliminaries}
		\renewcommand{\theequation}{2.\arabic{equation}}
		\hspace*{0.5cm}
To seek normalized solutions of (\ref{q1}), it suffices to find critical points of
		\begin{equation}\lab{g1}
			I\left ( u \right ) =\frac{1}{2} \int_{\mathbb{R}^{N} }^{} \varphi^{2}(u) \left | \nabla u \right | ^{2}\mathrm{d}x-\int_{\mathbb{R}^{N} }^{}F\left ( u \right )\mathrm{d}x
		\end{equation}
on the mass sphere $S_c$. Due to the presence of $\varphi$, some additional difficulties arise. In this paper, we adopt a dual approach in \cite{A19} to overcome them and make a change of variables as follows. Let $(u,\lambda)$ be any solution of problem \eqref{q1}, that is, for any $\tilde{\phi } \in C_{0}^{\infty } \left ( \mathbb{R}^{N} \right )$, there holds
		\begin{equation}\lab{g3}
			\int_{\mathbb{R} ^{N} }^{} \left [ \varphi ^{2}  \left ( u \right ) \nabla u\nabla \tilde{\phi }   + \varphi \left ( u \right ) \varphi ^{\prime } \left ( u \right ) \left | \nabla u \right | ^{2} \tilde{\phi }  +  \lambda u\tilde{\phi }  - f\left ( u \right )\tilde{\phi }  \right   ] \mathrm{d}x=0.
		\end{equation}
If ones take
\begin{equation*}
			v=\phi \left ( u \right )  =\int_{0}^{u} \varphi\left ( t \right ) \mathrm{d}t,
		\end{equation*}
and choose $\tilde{\phi } =\frac{1}{\varphi \left ( u \right ) } \psi $ in equation \eqref{g3} for any $\psi \in C_{0}^{\infty } \left ( \mathbb{R}^{N}  \right )$, then it follows that
		\begin{equation}\lab{h1}
			\int_{\mathbb{R}^{N}}^{} \left [ \nabla v\nabla \psi +\lambda \frac{ \phi^{-1} \left ( v \right )}{\varphi  \left (\phi^{-1} \left ( v \right )  \right ) }\psi - \frac{f\left (\phi^{-1} \left ( v \right )  \right )}{\varphi \left (\phi^{-1} \left ( v \right )  \right )}\psi   \right ]\mathrm{d}x=0.
		\end{equation}		
Then $(v,\lambda)$ is a solution of the following semilinear elliptic problem
\begin{equation}\lab{h2}
				-\Delta v+\lambda \frac{\phi^{-1} \left ( v \right ) }{\varphi \left (\phi^{-1} \left ( v \right )   \right )}
-\frac{f \left (\phi^{-1} \left ( v \right ) \right )}{\varphi \left (\phi^{-1} \left ( v \right  )  \right )} =0 ,  \quad x \in \mathbb{R}^{N}
		\end{equation}
with the prescribed mass
$$
\int_{\mathbb{R}^{N}}\left | \phi^{-1} \left ( v \right )  \right | ^{2} \mathrm{d}x=c.
$$
And corresponding to the energy functional $I$, the energy functional associated with problem \eqref{h2} is defined by
		\begin{equation}\lab{g2}
			J\left ( v \right ) =\frac{1}{2} \int_{\mathbb{R}^{N} }^{}\left  | \nabla v \right |^{2}\mathrm{d}x -\int_{\mathbb{R}^{N} }^{}F\left ( \phi^{-1} \left ( v \right )  \right ) \mathrm{d}x.
		\end{equation}
Since $\varphi$ is a nondecreasing positive function, we get $\left | \phi ^{-1}\left ( s \right )  \right | \le  \left | s \right |$ for any $s$. Moreover, it is clear that $J$ is well defined in $H^{1}\left ( \mathbb{R}^{N}  \right ) $ and of $C^{1}$-class. Therefore, in order to find normalized solutions of equation \eqref{q1}, it is sufficient to turn to consider the existence of normalized solutions to problem \eqref{h2}.

In the following, we intend to borrow some ideas in \cite{A10} to find normalized solutions of problem  \eqref{h2}.
		
		Let
		\begin{equation*}
			g_{\lambda } \left ( v \right ) =\frac{f \left (\phi^{-1} \left ( v \right ) \right )}{\varphi \left (\phi^{-1} \left ( v \right  )  \right )}-\lambda \frac{\phi^{-1} \left ( v \right ) }{\varphi \left (\phi^{-1} \left ( v \right )   \right )}+\lambda v ,
		\end{equation*}
		then  equation \eqref{h2} turns into
		\begin{equation}\label{scale}
			-\Delta v+\lambda v= g_{\lambda } \left ( v \right ),\, \quad x \in \mathbb{R}^{N}.
		\end{equation}

		\begin{lemma}\label{lemma2.1}
			Under the assumptions $\left(F_{1}\right)$-$\left(F_{3}\right)$ and $\left (\varphi _{0} \right )$ -$\left (\varphi _{1} \right )$, for any $\lambda>0$, there hold that
			\begin{itemize}
				\item [(i)] For some $1<p< 2^{*}-1$, $$\limsup_{s\rightarrow+\infty}\frac{g_\lambda(s)}{s^p}<\infty.$$
				
				\item[(ii)] $g_{\lambda } \left ( s \right )= o  \left ( s \right ) $, $s\to 0$.
				
				\item[(iii)] There exists $T > 0$ such that $\int_{0}^{T} g_{\lambda }\left ( \tau \right ) \mathrm{d} \tau> \frac{\lambda }{2} T^{2}$.

\item [(iv)]  $\lim \limits_{s \rightarrow+\infty} \frac{g_{\lambda } \left ( s \right )}{s^{\beta-1}}=\frac{\mu_2}{\left(a^*\right)^{\beta}}$.

\item [(v)] $g_\lambda(s)\le sg_\lambda'(s)$ for $s>0$ small.
			\end{itemize}
		\end{lemma}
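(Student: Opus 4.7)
The plan is to handle the five items together by exploiting the substitution $u = \phi^{-1}(s)$ and the asymptotic profiles of $\varphi$ and $f$. The basic facts used repeatedly are: $\phi^{-1}(s)/s \to 1$ as $s \to 0^+$ (from $\varphi(0)=1$), $\phi^{-1}(s)/s \to 1/a^\ast$ as $s \to +\infty$ (from $(\varphi_1)$), together with the power-type asymptotics of $f$ encoded in $(F_2)$, namely $f(u)/u^{\alpha-1} \to \mu_1$ at $0$ and $f(u)/u^{\beta-1} \to \mu_2$ at $\infty$ (obtained by integrating the hypotheses on $f'$).

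For (iv) I would write $g_\lambda(s) = f(u)/\varphi(u) + \lambda\bigl(s - u/\varphi(u)\bigr)$ with $u = \phi^{-1}(s)$. As $s \to +\infty$, $u \sim s/a^\ast$ and $\varphi(u) \to a^\ast$, so the nonlinear piece contributes $\mu_2 s^{\beta-1}/(a^\ast)^\beta$, whereas the remaining two terms are $O(s) = o(s^{\beta-1})$ since $\beta > 2$. Item (i) then follows by choosing $p := \beta - 1 \in (1, 2^\ast - 1)$. For (ii), the same substitution gives $f(u)/\varphi(u) = \mu_1 s^{\alpha-1}(1 + o(1)) = o(s)$ since $\alpha > 2$; for the $\lambda$-piece,
\begin{equation*}
s - \frac{u}{\varphi(u)} = \phi(u) - \frac{u}{\varphi(u)} = \frac{u(\varphi(u)-1)}{\varphi(u)} + \int_0^u (\varphi(t)-1)\, \mathrm{d}t = o(s)
\end{equation*}
by continuity of $\varphi$ at $0$. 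For (iii), the change of variables $\tau = \phi(u)$, $\mathrm{d}\tau = \varphi(u)\,\mathrm{d}u$ telescopes the integral to
\begin{equation*}
\int_0^T g_\lambda(\tau)\, \mathrm{d}\tau \,=\, F(\phi^{-1}(T)) - \frac{\lambda}{2}\phi^{-1}(T)^2 + \frac{\lambda}{2}T^2,
\end{equation*}
reducing the claim to finding $R > 0$ with $F(R) > \lambda R^2/2$, which holds for $R$ large since $F(R)/R^\beta \to \mu_2/\beta > 0$ by $(F_2)$ and $\beta > 2$.

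The delicate point is (v). Setting $\psi(u) := [f(u) - \lambda u]/\varphi(u)$, a direct chain-rule computation gives $g_\lambda'(v) = \psi'(u)/\varphi(u) + \lambda$, and the $\lambda v$-contributions cancel in
\begin{equation*}
v g_\lambda'(v) - g_\lambda(v) \,=\, \phi(u)\psi'(u)/\varphi(u) - \psi(u).
\end{equation*}
Multiplying through by $\varphi(u)^3$ and regrouping by powers of $\lambda$,
\begin{align*}
\varphi(u)^3 \bigl[v g_\lambda'(v) - g_\lambda(v)\bigr] &= \varphi(u)\bigl[\phi(u)f'(u) - f(u)\varphi(u)\bigr] - \phi(u) f(u) \varphi'(u) \\
&\quad + \lambda \varphi(u) \int_0^u \bigl[\varphi(u) - \varphi(t)\bigr] \mathrm{d}t \,+\, \lambda u\, \phi(u) \varphi'(u).
\end{align*}
The two $\lambda$-terms are manifestly nonnegative, using $u\varphi(u) - \phi(u) = \int_0^u [\varphi(u)-\varphi(t)]\,\mathrm{d}t \ge 0$ (monotonicity of $\varphi$) and $\varphi' \ge 0$ on $[0,+\infty)$. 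For the $f$-part, inserting $\phi(u) = u(1+o(1))$, $\varphi(u) = 1+o(1)$, $\varphi'(u) = o(1)$, $f(u) \sim \mu_1 u^{\alpha-1}$ and $f'(u) \sim \mu_1(\alpha-1)u^{\alpha-2}$ yields the leading term $\mu_1(\alpha-2) u^{\alpha-1} + o(u^{\alpha-1})$, strictly positive for $u > 0$ small since $\alpha > 2$. Hence $v g_\lambda'(v) - g_\lambda(v) > 0$ for $v > 0$ small.

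The principal obstacle is the algebraic rearrangement in (v): discovering the identity $u\varphi(u) - \phi(u) = \int_0^u [\varphi(u)-\varphi(t)]\,\mathrm{d}t$ is essential, for otherwise the $O(u^2)$ corrections induced by the nonconstant $\varphi$ could potentially spoil the sign of the leading $(\alpha-2)\mu_1 u^{\alpha-1}$ when $\alpha$ is large. The other four items reduce to routine asymptotics once the change of variables $u = \phi^{-1}(s)$ is in place.
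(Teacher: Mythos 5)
Your proof is correct and follows essentially the same route as the paper: items (i), (ii), (iv) are the same asymptotics under the substitution $u=\phi^{-1}(s)$, and your treatment of (v) --- reducing to the sign of $\varphi^{3}(u)\bigl[vg_\lambda'(v)-g_\lambda(v)\bigr]$, handling the $\lambda$-part via $u\varphi(u)\ge\phi(u)$ and $\varphi'\ge0$, and the $f$-part via $\lim_{u\to0^+}uf'(u)/f(u)=\alpha-1>1$ --- is precisely the paper's argument in a slightly different algebraic arrangement. The only (minor) divergence is (iii), where you compute $\int_0^Tg_\lambda(\tau)\,\mathrm{d}\tau$ exactly by the substitution $\tau=\phi(u)$ and reduce the claim to $F(R)>\lambda R^{2}/2$ for some $R$, whereas the paper applies L'H\^{o}pital to $t^{-2}\int_0^t g_\lambda(\tau)\,\mathrm{d}\tau$; both rest on the same superquadratic growth supplied by $(F_2)$.
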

		
		\begin{proof}
			\begin{itemize}
				\item [(i)] It is easy to check $(i)$ holds for $p=\beta-1$ by the assumption $(F_2)$.
				
				\item[(ii)] Since
				\begin{equation*}
					g_{\lambda } \left ( s \right ) =\frac{f \left (\phi^{-1} \left ( s \right ) \right )}{\varphi \left (\phi^{-1} \left ( s \right  )  \right )}-\lambda \frac{\phi^{-1} \left ( s \right ) }{\varphi \left (\phi^{-1} \left ( s \right )   \right )}+\lambda s ,
				\end{equation*}
then
				\begin{equation*}
					\begin{aligned}\lim_{s \to 0} \frac{g_{\lambda }\left ( s \right )  }{s}& =\lim_{s \to 0} \frac{f \left (\phi^{-1} \left ( s \right ) \right )}{s \varphi \left (\phi^{-1} \left ( s \right  )  \right )}-\lambda \lim_{s \to 0}\frac{\phi^{-1} \left ( s \right ) }{s \varphi \left (\phi^{-1} \left ( s \right )   \right )}+\lambda\\
						&=\lim_{s \to 0}\frac{f\left ( s \right ) }{s}=0.
					\end{aligned}
				\end{equation*}
				
				\item[(iii)] Since
				\begin{equation*}
					\begin{aligned}
						\lim_{t \to +\infty} \frac{\int_{0}^{t}g_{\lambda }\left ( \tau  \right )  \mathrm{d}\tau   }{t^{2} } &=\lim_{t \to +\infty}\frac{g_{\lambda }\left (t  \right ) }{2t} \\
						&= \frac{1}{2}\left [ \lim_{t \to +\infty}\frac{f \left (\phi^{-1} \left ( t \right ) \right )}{t \varphi \left (\phi^{-1} \left ( t \right  )  \right )}-\lambda \lim_{t \to +\infty}\frac{\phi^{-1} \left ( t \right ) }{t \varphi \left (\phi^{-1} \left ( t \right ) \right )}+\lambda \right ] .
					\end{aligned}
				\end{equation*}
Take $s=\phi ^{-1} \left ( t \right )$, then we get
				\begin{equation*}
					t=\phi \left ( s \right )=\int_{0}^{s}\varphi \left ( \tau  \right )  \mathrm{d}\tau
				\end{equation*}
				and
				\begin{equation*}
					\lambda \lim_{t \to +\infty}\frac{\phi^{-1} \left ( t \right ) }{t \varphi \left (\phi^{-1} \left ( t \right )   \right )}=\lambda \lim_{s \to +\infty}\frac{s}{\phi \left ( s \right ) \varphi \left ( s \right ) } ,
				\end{equation*}
Noting that $\lim\limits_{s \to +\infty} \varphi \left ( s \right ) =a^\ast$,
				\begin{equation*}
					\begin{aligned}
						\lambda \lim\limits_{s \to +\infty}\frac{s}{\phi \left ( s \right ) \varphi \left ( s \right ) }&=\frac{\lambda}{a^\ast}\lim\limits_{s \to +\infty}\frac{s}{\int_{0}^{s}\varphi \left ( \tau  \right )  \mathrm{d}\tau}\\
						&=\frac{\lambda}{a^\ast}\lim\limits_{s \to +\infty}\frac{1}{\varphi \left ( s \right )}\\
						&=\frac{\lambda }{(a^\ast)^{2}}.
					\end{aligned}
				\end{equation*}
Meanwhile, it follows from $(F_2)$ that
				\begin{equation*}
					\begin{aligned}
						\lim_{t \to +\infty}\frac{f \left (\phi^{-1} \left ( t \right ) \right )}{t \varphi \left (\phi^{-1} \left ( t \right  )  \right )}&=\lim_{s \to +\infty}\frac{f\left ( s \right ) }{\phi \left ( s \right ) \varphi \left ( s \right ) }=\frac{1}{a^\ast} \lim_{s \to +\infty} \frac{f\left ( s \right ) }{\phi \left ( s \right ) } \\
						&= \frac{1}{(a^\ast)^{2}} \lim_{s \to +\infty} \frac{f\left ( s \right ) }{s}=+\infty .
					\end{aligned}	   	
				\end{equation*}
Thus we obtain that
$$
\lim_{t \to +\infty} \frac{\int_{0}^{t}g_{\lambda }\left ( \tau  \right )  \mathrm{d}\tau   }{t^{2} } =\infty,
$$
which implies that there exists $T > 0$ such that $\int_{0}^{T} g_{\lambda }\left ( \tau \right ) \mathrm{d} \tau> \frac{\lambda }{2} T^{2}$.
\item [(iv)]
Set $p=\beta -1$ and notice that $\phi(s)\ge s$ for any $s>0$, we have
			\begin{equation*}\label{a2}
				\begin{aligned}
					\lim\limits  _{s \rightarrow+\infty} \frac{g_{\lambda}(s)}{s^{p}} & =\lim _{s \rightarrow+\infty} \frac{f\left(\phi^{-1}(s)\right.)}{s^{p} \varphi\left(\phi^{-1}(s)\right.)}+\lim _{s \rightarrow+\infty} \frac{\lambda s-\lambda \frac{\phi^{-1}(s)}{\varphi \left ( \phi ^{-1} \left ( s \right ) \right)}}{s^{p}} \\
					& =\lim \limits_{t \rightarrow+\infty} \frac{f(t)}{(\phi(t))^{p} \varphi(t)}-\lambda \lim _{t \rightarrow+\infty} \frac{t}{(\phi(t))^{p} \varphi(t)} \\
					& =\lim \limits_{t \rightarrow+\infty} \frac{f(t)}{(\phi(t))^{p} \varphi(t)}=\lim _{t \rightarrow+\infty} \frac{f(t)}{\left(ta^*\right)^{p} a^*} \\
					& =\lim \limits_{t \rightarrow+\infty} \frac{f(t)}{t^p} \cdot \frac{1}{\left(a^*\right)^{p+1}}=\frac{\mu_2}{\left(a^*\right)^{p+1}}.
				\end{aligned}
			\end{equation*}
\item [(v)] It suffices to show that $\varphi(t)g_\lambda(\phi(t))\le \phi(t)\frac{\rm{d}}{\rm{d}t}[g_\lambda(\phi(t))]$ for $t>0$ small. Obviously, 
\begin{align*}
\frac{\rm{d}}{\rm{d}t}[g_\lambda(\phi(t))]
&=\frac{f'(t)\varphi(t)-f(t)\varphi'(t)}{\varphi^2(t)}+\frac{\lambda\varphi^3(t)-\lambda\varphi(t)+\lambda t\varphi'(t)}{\varphi^2(t)}
\end{align*}
By $(\varphi_0)$, for any $t\ge0$, $\phi(t)\le t\varphi(t)$ and then
$$
\phi(t)\varphi^{-2}(t)\left[\varphi^3(t)-\varphi(t)+t\varphi'(t)\right]\ge\phi(t)\varphi(t)-t.
$$
Meanwhile, by $(F_2)$,
$$
\lim_{t\rightarrow0^+}\frac{\phi(t)\varphi(t)f'(t)}{f(t)}=\alpha-1>1,
$$
which implies that for $t>0$ small,
$$
\phi(t)\left[f'(t)\varphi(t)-f(t)\varphi'(t)\right]\ge\varphi^2(t)f(t).
$$
Thus, it follows that
$$
\phi(t)\frac{\rm{d}}{\rm{d}t}[g_\lambda(\phi(t))]\ge f(t)-\lambda t+\lambda \phi(t)\varphi(t)=\varphi(t)g_\lambda(\phi(t)),\,t>0\,\,\mbox{small}.
$$
			\end{itemize}
		\end{proof}
	
By Lemma \ref{lemma2.1}, for any fixed $\lambda>0$, $g_\lambda$ satisfies the Berestycki-Lions conditions. As a consequence of \cite[Theorem 1]{Lions}, for any $\lambda > 0$, problem \eqref{scale} admits a ground state solution in $H^1(\mathbb{R}^N)$, which is positive and radially symmetric.  Denoting
		\begin{equation*}
			\begin{aligned}
				\mathcal{S}= \Biggl\{&\left ( \lambda ,  \, v_{\lambda }  \right ) \in \left ( 0,  \, +\infty \right )\times H_{rad}^{1}\left ( \mathbb{R} ^{N}  \right ) : v_\lambda>0, \left ( \lambda ,  \, v_{\lambda }\right ) \, \text{solves} \\
				&-\Delta v+\lambda \frac{\phi^{-1} \left ( v \right ) }{\varphi \left (\phi^{-1} \left ( v \right )   \right )}-\frac{f \left (\phi^{-1} \left ( v \right ) \right )}{\varphi \left (\phi^{-1} \left ( v \right  )  \right )} =\,\,\mbox{in}\,\,\mathbb{R}^N     \Biggr\}.
			\end{aligned}	
		\end{equation*}
\br
As a consequence of \cite[Theorem 2]{GNN-1981}, for any $\lambda > 0$, for any positive $C^2$ solution $u$ of problem \eqref{scale} with $u(x)\rightarrow0$ as $|x|\rightarrow\infty$, one can show that $u$ is radially symmetric about some point $x_0\in \mathbb{R}^N$, that is, $u(x)=u_0(|x-x_0|)$, where $\frac{\partial u_0}{\partial r}<0$ for $r=|x-x_0|>0$. In what follows, we assume that $x_0=0$. Actually, thanks to the elliptic estimate, $u$ and $|\nabla u|$ decays exponentially at infinity. Set $g_\lambda(t)=h_1(t)+h_2(t), t\ge0$, where
$$
h_1(t) =\frac{f \left (\phi^{-1} \left ( t \right ) \right )}{\varphi \left (\phi^{-1} \left ( t\right  )  \right )},\,\,h_2(t)=-\lambda \frac{\phi^{-1} \left ( t \right ) }{\varphi \left (\phi^{-1} \left ( t \right )   \right )}+\lambda t ,
$$
then by $(F_2)$, $h_1(t)=O(|t|^{\alpha-1})$ as $t\rightarrow0$ and by $(\varphi_0)$,
\begin{align*}
\lim_{t\rightarrow0}\frac{h_2(t)}{t^2}&=\lambda\lim_{t\rightarrow0}\frac{\phi(t)\varphi(t)-t}{t^2}\\
&=\lambda\lim_{t\rightarrow0}\frac{\phi(t)[\varphi(t)-1]}{t^2}+\lambda\lim_{t\rightarrow0}\frac{\phi(t)-t}{t^2}\\
&=\frac{3\lambda}{2}\varphi'(0)=0.
\end{align*}
It follows that $g_\lambda(t)=O(|t|^{\min\{\alpha-1,2\}})$ as $t\rightarrow0$. Moreover,
$$
h_2'(t)=\lambda\varphi^{-3}(\phi^{-1}(t))\left[\varphi^3(\phi^{-1}(t))-\varphi(\phi^{-1}(t))+\phi^{-1}(t)\varphi'(\phi^{-1}(t))\right]\ge0,\,\,t\ge0,
$$
which implies that $h_2$ is nondecreasing for $t\ge0$. As for the condition on $h_1$ as follows, for some $C>0, p>1$,
$$
\left|h_1(t_1)-h_1(t_2)\right| \leq C|t_1-t_2| /|\log \min (t_1, t_2)|^p, \quad t_1,t_2\in[0,\max_{x\in\mathbb{R}^N}u(x)],
$$
to prove \cite[Theorem 2]{GNN-1981}, it is only used in the proof of \cite[Lemma 6.3]{GNN-1981}. Since $u$ decays exponentially, one can check that \cite[Lemma 6.3]{GNN-1981} still holds in our case and as well as \cite[Theorem 2]{GNN-1981}.
\er

\section{Asymptotic behaviors}
\renewcommand{\theequation}{3.\arabic{equation}}
In this section, we consider the asymptotic behaviors of $v_\lambda$ as $\lambda = \lambda_{n} \to 0^{+}$ or $\lambda = \lambda_{n} \to +\infty$. Similarly to \cite{A10}, we have the following result in the case of $\lambda = \lambda_{n} \to 0^{+}$.
		
		\begin{lemma}\label{lemma3.1}
		Let $\left\{v_{n}\right\}_{n=1}^{\infty} \subset \mathcal{S}$ with $\lambda=\lambda_{n} \rightarrow 0^{+}$, then the following asserts hold.
			\begin{itemize}
				\item [(i)]	
				\begin{equation*}\label{a3}
					\limsup\limits_{n \to +\infty}\left \| v_{n}  \right \| _{\infty } < + \infty.
				\end{equation*}
				
				\item [(ii)] 	
				\begin{equation*}\label{a4}
					\liminf _{n \rightarrow+\infty} \frac{\left\|v_{n}\right\|_{\infty}^{\alpha-2}}{\lambda_{n}}>0.
				\end{equation*}
				
				\item [(iii)] 	
				\begin{equation*}\label{a5}
					\limsup _{n \rightarrow+\infty} \frac{\left\|v_{n}\right\|_{\infty}^{\alpha-2}}{\lambda_{n}}<+\infty.
				\end{equation*}
				
				\item [(iv)] Set
				\begin{equation}\label{z7}
					w_{n}(x):=\lambda_{n}^{\frac{1}{2-\alpha}} v_{n}\left(\frac{x}{\sqrt{\lambda_{n}}}\right),
				\end{equation}
then $w_{n}$ satisfies
				\begin{equation*}\label{k1}
					-\Delta w_{n}+w_{n}=\frac{g_{\lambda _{n} } \left(\lambda_{n}^{\frac{1}{\alpha-2}} w_{n}\right)}{\lambda_{n}^{\frac{\alpha-1}{\alpha-2}}} \text { in } \mathbb{R}^{N}
				\end{equation*}	
and
				\begin{equation}\label{k2}
					w_{n} \rightarrow U \text{ in } H^{1}\left(\mathbb{R}^{N}\right)\,\mbox{and}\,\,C_{r, 0} \left ( \mathbb{R} ^{N}  \right ),
				\end{equation}
where $U \in C_{r,0}(\RN)$ is the unique positive solution of \eqref{k4}
\end{itemize}
		\end{lemma}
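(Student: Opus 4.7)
The plan is to adapt the blow-up/rescaling scheme of \cite{A10} to the dual equation (\ref{scale}). The two governing asymptotics of the nonlinearity are: near zero, since $\varphi(0)=1$ and $\phi^{-1}(v)=v+O(v^2)$, hypothesis $(F_2)$ yields $g_{\lambda}(s)=\mu_1 s^{\alpha-1}(1+o(1))$ up to lower-order $\lambda$-corrections; at infinity, Lemma \ref{lemma2.1}(iv) gives $g_{\lambda}(s)\sim \mu_2 (a^\ast)^{-\beta} s^{\beta-1}$. Both exponents lie below $2^\ast$, which is what makes Liouville-type arguments available. Together with the radial symmetry of $v_n$, standard elliptic regularity, and the compact embedding $H^1_{rad}(\mathbb{R}^N)\hookrightarrow L^q(\mathbb{R}^N)$ for $2<q<2^\ast$, each suitably rescaled subsequence will admit a classical radial decreasing limit solving a model PDE.

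For parts (i)--(iii), I argue by contradiction. For (i), if $M_n:=\|v_n\|_\infty\to\infty$, the ``infinity rescaling'' $\tilde v_n(x):=M_n^{-1}v_n(M_n^{-(\beta-2)/2}x)$, combined with Lemma \ref{lemma2.1}(iv) and $\lambda_n/M_n^{\beta-2}\to 0$, produces in the limit a positive radial decreasing solution of
\[
-\Delta \tilde v=\frac{\mu_2}{(a^\ast)^{\beta}}\,\tilde v^{\beta-1}\quad\text{in}\ \mathbb{R}^N,
\]
which is impossible by the Gidas--Spruck Liouville theorem since $\beta<2^\ast$. For (ii), if $\|w_n\|_\infty\to 0$ (equivalently $M_n^{\alpha-2}/\lambda_n\to 0$), normalizing $w_n$ by $\|w_n\|_\infty$ sends the rescaled right-hand side to $0$ while the normalization is preserved, producing a nontrivial bounded positive decaying solution of $-\Delta \bar w+\bar w=0$ in $\mathbb{R}^N$---impossible. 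For (iii), if $\|w_n\|_\infty\to\infty$ while $M_n$ stays bounded (by (i)), an analogous rescaling of $v_n$ at scale $M_n$ yields, after letting $\lambda_n\to 0^+$ and undoing $v=\phi(u)$, a positive radial decreasing classical solution of $-\mathrm{div}(\varphi^2(u)\nabla u)+\varphi(u)\varphi'(u)|\nabla u|^2=f(u)$ in $\mathbb{R}^N$, contradicting $(F_3)$.

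For (iv), parts (i)--(iii) pinch $\|w_n\|_\infty$ between two positive constants. A Taylor expansion at zero---using $\varphi(0)=1$, $(\varphi_2)$, and $(F_2)$---shows that the rescaled nonlinearity
\[
\frac{g_{\lambda_n}(\lambda_n^{1/(\alpha-2)}w_n)}{\lambda_n^{(\alpha-1)/(\alpha-2)}}
\]
converges uniformly on bounded sets of $\mathbb{R}^N$ to $\mu_1 w^{\alpha-1}$; the $\lambda_n$-dependent contributions $\lambda v$ and $-\lambda\phi^{-1}(v)/\varphi(\phi^{-1}(v))$ cancel up to higher-order corrections. A Pohozaev identity combined with the $L^\infty$ bound gives $H^1$-boundedness of $w_n$; radial compactness then produces an $H^1\cap C_{r, 0}$ subsequential limit $w$ solving $-\Delta w+w=\mu_1 w^{\alpha-1}$, which must equal $U$ by uniqueness. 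Since the limit is unique, the full sequence converges, yielding (\ref{k2}). The main technical obstacle is uniformly controlling the $\varphi$-induced lower-order corrections across these different rescalings: near the origin, the terms $\lambda v-\lambda\phi^{-1}(v)/\varphi(\phi^{-1}(v))$ must be shown to contribute only at order $\lambda v^2$ (not $\lambda v$), so that the coefficient $\mu_1$ emerges cleanly; this requires careful Taylor bookkeeping based on $\varphi(0)=1$ and the asymptotic hypotheses $(\varphi_0)$--$(\varphi_2)$.
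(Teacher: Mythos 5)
The paper gives no argument for this lemma beyond ``similar to \cite{A10}'', and your proposal is a faithful reconstruction of that blow-up--plus--Liouville scheme applied to the dual equation \eqref{scale}; in outline it is the intended proof, and parts (i) and (ii) are correctly set up. Two points, however, need repair before the sketch closes.

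First, in (iii) your single rescaling ``at scale $M_n$'' conflates two regimes that must be treated separately. If $M_n:=\|v_n\|_\infty$ stays bounded away from $0$ (it is bounded above by (i)), then $M_n^{\alpha-2}/\lambda_n\to\infty$ automatically, no amplitude rescaling is needed, and the $C^2_{\rm loc}$ limit of $v_n$ solves $-\Delta v=f(\phi^{-1}(v))/\varphi(\phi^{-1}(v))$; only here does ``undoing $v=\phi(u)$'' produce the quasilinear equation and hence a contradiction with $(F_3)$. If instead $M_n\to0$ while $M_n^{\alpha-2}/\lambda_n\to\infty$, the amplitude-rescaled limit solves the semilinear model $-\Delta \tilde v=\mu_1\tilde v^{\,\alpha-1}$, because the $\varphi$-factors trivialize as the amplitudes vanish; undoing the change of variables does \emph{not} recreate the quasilinear operator, so the contradiction must come from the Gidas--Spruck-type nonexistence result for $\alpha<2^{*}$ (as in \cite[Theorem 2.2-(ii)]{A10}), not from $(F_3)$. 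Second, in (iv) the Pohozaev identity does not by itself give $H^1$-boundedness of $w_n$: testing the equation with $w_n$ (or using Pohozaev) only yields an inequality of the form $\|w_n\|^2\le C\,\|w_n\|_\infty^{\alpha-2}\,\|w_n\|_2^2$, which is vacuous. The standard route is to combine the radial monotonicity of $w_n$, the $C^2_{\rm loc}$ convergence to $U$, and the uniform smallness of the rescaled nonlinearity divided by $s$ for small $s$, to derive a uniform exponential decay estimate by comparison on $\{|x|\ge R\}$; both the $H^1$ and the $C_{r,0}$ convergence in \eqref{k2} then follow. With these two corrections your argument matches the one the paper intends.
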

	
\bp
The proof is similar to \cite{A10} and we omit the details.
\ep
		In the following, we consider the case of $\lambda = \lambda_{n} \rightarrow +\infty$.
		
		\begin{lemma}\label{z8}
			Let $\left\{v_{n}\right\}_{n=1}^{\infty} \subset \mathcal{S}$ with $\lambda=\lambda_{n} \rightarrow+\infty$, then up to a subsequence,
			\begin{equation*}\label{o8}
				\liminf _{n \rightarrow+\infty}\left\|v_{n}\right\|_{\infty}=+\infty.
			\end{equation*}
		\end{lemma}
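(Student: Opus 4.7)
\emph{The plan} is to argue by contradiction: if $\{\|v_n\|_\infty\}$ admitted a bounded subsequence, evaluating equation \eqref{h2} at the peak of $v_n$ would force $\{\lambda_n\}$ to remain bounded, contradicting $\lambda_n\to+\infty$. Suppose therefore, passing to a subsequence, that $\|v_n\|_\infty\le M$ for some $M>0$. Each $v_n$ is positive, radial, and a classical solution of \eqref{h2} (by elliptic regularity, since the right-hand side is smooth in $v$); by the Gidas--Ni--Nirenberg-type symmetry recorded in the remark after Lemma \ref{lemma2.1}, $v_n$ is strictly decreasing in $r=|x|$, so $v_n(0)=\|v_n\|_\infty$ and $-\Delta v_n(0)\ge 0$.

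Evaluating \eqref{h2} at the origin and multiplying through by the positive factor $\varphi(\phi^{-1}(v_n(0)))$, the inequality $-\Delta v_n(0)\ge 0$ yields
\[
\lambda_n\,\phi^{-1}(v_n(0))\le f\bigl(\phi^{-1}(v_n(0))\bigr).
\]
Setting $t_n:=\phi^{-1}(v_n(0))>0$, this rearranges to $\lambda_n\le f(t_n)/t_n$. From $(F_1)$--$(F_2)$ and $\alpha>2$ one has $\lim_{t\to 0^+}f(t)/t=0$, so $t\mapsto f(t)/t$ extends continuously to $[0,+\infty)$ and is bounded on every compact subset. Since $\phi$ is strictly increasing with $\phi(0)=0$, $0<t_n\le\phi^{-1}(M)$ for all $n$, so $\{f(t_n)/t_n\}$ is bounded, contradicting $\lambda_n\to+\infty$. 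This rules out any bounded subsequence of $\{\|v_n\|_\infty\}$ and delivers $\liminf_{n\to\infty}\|v_n\|_\infty=+\infty$.

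The only nontrivial point is the legitimacy of evaluating \eqref{h2} pointwise at the maximum, which requires $v_n\in C^2$ and an interior maximum---both supplied by elliptic regularity and the radial monotonicity from the remark after Lemma \ref{lemma2.1}. Once those are in hand, the rest is a one-line consequence of the subcritical behavior of $f$ at the origin encoded in $(F_2)$.
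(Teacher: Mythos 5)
Your proof is correct and follows the same core strategy as the paper: assume a bounded subsequence of $\|v_n\|_\infty$, evaluate the equation at the peak where $-\Delta v_n\ge 0$, and derive $\lambda_n\,\phi^{-1}(v_n(0))\le f(\phi^{-1}(v_n(0)))$ to contradict $\lambda_n\to+\infty$. Your endgame is in fact tidier than the paper's: where you simply bound $f(t)/t$ on the compact set $(0,\phi^{-1}(M)]$ using $(F_1)$--$(F_2)$, the paper inserts the growth estimate $|f(t)|\le c(|t|^{\alpha-1}+|t|^{\beta-1})$, splits into the cases $\phi^{-1}(v_n(0))\to 0$ (ruled out by an auxiliary integral identity) and $\phi^{-1}(v_n(0))$ bounded away from zero, so your argument reaches the same contradiction with less machinery.
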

		
		\begin{proof}
			By regularity, for any fixed $n$, $v_{n} \in C^{2}\left ( \mathbb{R} ^{N}  \right )$ and we may suppose that $v_{n}\left ( 0 \right )=\left \| v_{n}  \right \| _{\infty }$.
			Setting
			\begin{equation*}
				\bar{{v_{n}}}\left ( x \right ) : = \frac{1}{v_{n} \left ( 0 \right ) } v_{n} \left ( \frac{x}{\sqrt{\lambda_{n} } } \right ),
			\end{equation*}
			we have
			\begin{equation}\label{t1}
				\begin{aligned}
					1=\bar{v_{n}}\left ( 0 \right ) \le& -\Delta \bar{v_{n}}\left ( 0 \right )+\bar{v_{n}}\left ( 0 \right )\\
					=&\frac{1}{\lambda_{n}v_{n}\left ( 0 \right ) } g_{\lambda_{n} } \left ( v_{n}\left ( 0 \right )  \right ) \\
					=&\frac{f\left ( \phi ^{-1}  \left ( v_{n}\left ( 0 \right ) \right ) \right ) }{\lambda_{n}v_{n}\left ( 0 \right )\varphi \left ( \phi ^{-1} \left ( v_{n}\left ( 0 \right )  \right ) \right ) }-\frac{ \phi ^{-1} \left ( v_{n}\left ( 0 \right )  \right) }{v_{n}\left ( 0 \right )\varphi \left ( \phi ^{-1} \left ( v_{n}\left ( 0 \right )  \right ) \right ) } +1\\
					\le& \frac{c\left ( \left | \phi ^{-1} \left ( v_{n}\left ( 0 \right )  \right) \right | ^{\alpha -1} +\left | \phi ^{-1} \left ( v_{n}\left ( 0 \right )  \right) \right | ^{\beta -1}  \right ) }{\lambda_{n}v_{n}\left ( 0 \right ) }  +1\\
					&-\frac{ \phi ^{-1} \left ( v_{n}\left ( 0 \right )  \right) }{v_{n}\left ( 0 \right )\varphi \left ( \phi ^{-1} \left ( v_{n}\left ( 0 \right )  \right ) \right ) } ,
				\end{aligned}
			\end{equation}
where we used the fact that for some $c>0$, $|f(t)|\le c(|t|^{\alpha-1}+|t|^{\beta-1}),\,t\ge0$.
			
Let $\phi ^{-1} \left ( v_{n}\left ( 0 \right )  \right) =a_{n} $, then $v_{n}\left ( 0 \right ) =\phi \left ( a_{n}  \right ) =\int_{0}^{a_{n}}\varphi \left ( \tau \right )\mathrm{d}\tau $. Next we argue by contradiction and suppose now that $\{a_{n}\}$ is bounded. Then (\ref{t1}) yields that
			\begin{equation}\label{k11}
				\begin{aligned}
					1 &\le \frac{c\left ( a_{n}^{\alpha -1} + a_{n}^{ \beta -1}\right ) }{\lambda_{n} \phi\left ( a_{n} \right ) }+1-\frac{a_{n}}{\phi(a_n)\varphi \left ( a_{n} \right ) } \\
					&\le \frac{c\left ( a_{n}^{\alpha -2} + a_{n}^{ \beta -2}\right ) }{\lambda_{n}}+1-\frac{a_{n}}{\phi(a_n)\varphi \left ( a_{n} \right ) } .
				\end{aligned}
			\end{equation}
If $a_{n} \to 0$, then $v_{n}\left(0\right)\to 0$. Recall that $v_{n}$ satisfies
			\begin{equation}\label{o10}
				-\Delta v_{n}+\lambda_{n}\frac{\phi ^{-1} \left ( v_{n} \right ) }{\varphi \left ( \phi ^{-1} \left ( v_{n} \right ) \right ) } =\frac{f\left ( \phi ^{-1} \left ( v_{n} \right ) \right ) }{\varphi \left ( \phi ^{-1} \left ( v_{n} \right ) \right ) }\,\,\mbox{in}\,\,\mathbb{R}^N.
			\end{equation}
Thanks to $(F_2)$, it follows that
			\begin{equation*}
				-\Delta v_{n}+\frac{\left ( \lambda_{n}-1 \right ) \phi ^{-1} \left ( v_{n} \right ) }{\varphi \left ( \phi ^{-1} \left ( v_{n} \right ) \right ) } \le 0 \,\,\mbox{in}\,\,\mathbb{R}^N.
			\end{equation*}
			Multiplying both sides by $v_{n}$ and integrating in $\mathbb{R}^N$, we get
			\begin{equation}\label{o11}
				\int_{\mathbb{R}^{N} }^{} \left | \nabla v_{n} \right | ^{2} \mathrm{d}x+\left ( \lambda_{n}-1 \right ) \int_{\mathbb{R}^{N} }^{} \frac{\phi ^{-1}\left ( v_{n}  \right ) v_{n}}{\varphi \left ( \phi ^{-1}\left ( v_{n}  \right ) \right )} \mathrm{d}x\le 0,
			\end{equation}
which is contradiction, due to $\phi ^{-1}\left ( v_{n}\left ( x \right ) \right )v_{n}\left ( x \right )>0, x\in\mathbb{R}^N$. So $\{a_{n}\}$ is positive and bounded away from zero. Passing to the limit of \eqref{k11} as $n\rightarrow\infty$, we have
$$
\limsup_{n\rightarrow\infty}\frac{a_{n}}{\phi(a_n)\varphi \left ( a_{n} \right ) }\le0,
$$
which is a contraction. Thus, up to a subsequence, $v_{n}\left ( 0 \right )  \to \infty $.
				\end{proof}
			
			\begin{lemma}\label{11}
					Let $\left\{v_{n}\right\}_{n=1}^{\infty} \subset \mathcal{S}$ with $\lambda=\lambda_{n} \rightarrow+\infty$, then
				\begin{equation}\label{o9}
					\liminf _{n \rightarrow+\infty} \frac{\left\|v_{n}\right\|_{\infty}^{\beta-2}}{\lambda_{n}}>0.
				\end{equation}
			\end{lemma}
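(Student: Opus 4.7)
The plan is to exploit the second-order information of $v_n$ at its maximum point. By the remark following Lemma~\ref{lemma2.1}, each $v_n \in C^2(\mathbb{R}^N)$ is radially symmetric about some point which we may take to be the origin, and $v_n(0)=\|v_n\|_\infty$ is a global maximum. Hence $-\Delta v_n(0)\ge 0$, so evaluating equation \eqref{h2} at $x=0$ and multiplying through by $\varphi(\phi^{-1}(v_n(0)))>0$ yields
\begin{equation*}
\lambda_n\,\phi^{-1}(v_n(0))\;\le\; f\bigl(\phi^{-1}(v_n(0))\bigr).
\end{equation*}

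Next I would set $a_n:=\phi^{-1}(v_n(0))$. By Lemma~\ref{z8}, up to the subsequence extracted there, $v_n(0)\to+\infty$; since $\varphi\ge 1$ on $[0,\infty)$ by $(\varphi_0)$ we have $\phi(t)=\int_0^t\varphi(s)\,\mathrm{d}s\ge t$ for $t\ge 0$, so $\phi$ is a strictly increasing bijection of $\mathbb{R}$ onto itself with $\phi(t)\to+\infty$ as $t\to+\infty$, and therefore $a_n\to+\infty$. Integrating the asymptotic $f'(t)\sim\mu_2(\beta-1)t^{\beta-2}$ given by $(F_2)$ yields $\lim_{t\to+\infty} f(t)/t^{\beta-1}=\mu_2$. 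Consequently, for all sufficiently large $n$,
\begin{equation*}
\lambda_n\, a_n \;\le\; f(a_n) \;\le\; 2\mu_2\, a_n^{\beta-1}, \qquad\text{i.e.,}\qquad \lambda_n \;\le\; 2\mu_2\, a_n^{\beta-2}.
\end{equation*}

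Finally, applying $\phi(t)\ge t$ at $t=a_n$ gives $a_n\le \phi(a_n)=v_n(0)=\|v_n\|_\infty$, and since $\beta>2$ this yields $a_n^{\beta-2}\le\|v_n\|_\infty^{\beta-2}$. Combining with the previous inequality,
\begin{equation*}
\frac{\|v_n\|_\infty^{\beta-2}}{\lambda_n} \;\ge\; \frac{1}{2\mu_2}>0,
\end{equation*}
which proves \eqref{o9}. There is no real obstacle: the only points to keep track of are (i) the pointwise application of $-\Delta v_n(0)\ge 0$, which is legitimate thanks to the radial symmetry/$C^2$ regularity secured by the remark after Lemma~\ref{lemma2.1}, and (ii) the comparison $a_n\le \|v_n\|_\infty$ coming from $\varphi(0)=1$ together with the monotonicity in $(\varphi_0)$.
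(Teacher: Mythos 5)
Your proof is correct and follows essentially the same route as the paper: both evaluate the equation at the maximum point, use $-\Delta v_n(0)\ge 0$ to get $f(\phi^{-1}(\|v_n\|_\infty))\ge \lambda_n\,\phi^{-1}(\|v_n\|_\infty)$, and conclude from the growth of $f$ at infinity. You merely make explicit the details the paper leaves implicit (the appeal to Lemma~\ref{z8} for $\|v_n\|_\infty\to+\infty$, the asymptotic $f(t)\sim\mu_2 t^{\beta-1}$ from $(F_2)$, and the comparison $\phi^{-1}(\|v_n\|_\infty)\le\|v_n\|_\infty$), which is a welcome clarification rather than a deviation.
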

			
			\begin{proof}
				Recall that
				\begin{equation*}
	           -\Delta v_{n}= \frac{f\left(\phi^{-1}\left( v_{n}\right)\right)}{\varphi\left(\phi^{-1}\left( v_{n}\right)\right)}-\lambda_{n} \frac{\phi^{-1}\left( v_{n}\right)}{ \varphi \left(\phi^{-1}\left( v_{n}\right)\right)}\,\,\mbox{in}\,\,\mathbb{R}^N.
				\end{equation*}
Without loss of generality, we suppose that $v_{n} \left(0\right)=\left\|v_{n}\right\|_{\infty}, \, \forall n \in \mathbb{N}$. Since $-\Delta v_{n}(0) \ge 0$,
				\begin{equation*}
					f\left(\phi^{-1}\left(\left\|v_{n}\right\|_{\infty}\right)\right) \ge \lambda_{n} \phi^{-1}\left( \left\|v_{n}\right\|_{\infty}\right) ,
				\end{equation*}
which yields that
				\begin{equation*}
					\liminf _{n \rightarrow +\infty}\frac{\left\|v_{n}\right\|_{\infty}^{\beta-{2}}}{\lambda_{n}}>0,
				\end{equation*}
			so $(\ref{o9})$ is proved.
			\end{proof}
		
		\begin{lemma}\label{12}
			Let $\left\{v_{n}\right\}_{n=1}^{\infty} \subset \mathcal{S}$ with $\lambda=\lambda_{n} \rightarrow+\infty$, then
			\begin{equation}
				\limsup\limits_{n \to +\infty}\frac{\left \| v_{n}  \right \| _{\infty }^{\beta -2}  }{\lambda _{n} }< +\infty .
			\end{equation}
		\end{lemma}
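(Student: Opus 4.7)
The plan is to argue by contradiction via a blow-up rescaling, passing to a limit equation on $\mathbb{R}^N$ and invoking the Gidas--Spruck Liouville theorem. This mirrors the strategy behind Lemma \ref{lemma3.1} and \cite{A10}, tuned to the regime $\lambda_n \to +\infty$ where the relevant asymptotics at infinity are governed by $\beta$.

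Suppose for contradiction that, along a subsequence, $M_n^{\beta-2}/\lambda_n \to +\infty$, where $M_n := \|v_n\|_\infty = v_n(0)$ (using the radial-decreasing property recalled at the end of Section~2); equivalently $\lambda_n/M_n^{\beta-2} \to 0$. By Lemma \ref{z8}, $M_n \to +\infty$. Introduce the rescaling
\begin{equation*}
w_n(x) \; := \; \frac{1}{M_n}\, v_n\!\bigl(M_n^{-(\beta-2)/2}\, x\bigr),
\end{equation*}
so that $w_n(0)=1$, $0 \le w_n \le 1$, and $w_n$ is radially decreasing. A direct computation from \eqref{scale} yields
\begin{equation*}
-\Delta w_n + \frac{\lambda_n}{M_n^{\beta-1}}\cdot \frac{\phi^{-1}(M_n w_n)}{\varphi(\phi^{-1}(M_n w_n))} \; = \; \frac{1}{M_n^{\beta-1}}\cdot \frac{f(\phi^{-1}(M_n w_n))}{\varphi(\phi^{-1}(M_n w_n))} \quad \text{in } \mathbb{R}^N.
\end{equation*}

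I would next show that the right-hand side is uniformly bounded in $L^\infty_{\loc}(\mathbb{R}^N)$. Indeed, $\varphi \ge 1$ and $|\phi^{-1}(s)| \le |s|$ hold by $(\varphi_0)$, while $(F_2)$ delivers a global bound $|f(t)| \le c\,(t^{\alpha-1}+t^{\beta-1})$ for $t \ge 0$; dividing by $M_n^{\beta-1}$ and using $w_n \le 1$ yields the uniform bound. Standard interior elliptic estimates then extract a subsequence $w_n \to w$ in $C^2_{\loc}(\mathbb{R}^N)$ with $w \in C^2(\mathbb{R}^N)$ non-negative, radially decreasing, $w(0)=1$ and $w \le 1$. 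To identify the limit equation, at any $x$ with $w(x)>0$ one has $M_n w_n(x)\to +\infty$, so by $\varphi(s)\to a^*$ and $\phi^{-1}(s)/s\to 1/a^*$ as $s\to +\infty$ together with $f(t)/t^{\beta-1}\to \mu_2$ from $(F_2)$, the right-hand side converges to $\frac{\mu_2}{(a^*)^\beta}\,w(x)^{\beta-1}$, while the linear term vanishes thanks to $\lambda_n/M_n^{\beta-2}\to 0$; on $\{w=0\}$ both sides vanish by continuity. Hence $w$ is a bounded, non-negative $C^2$ solution on $\mathbb{R}^N$ of
\begin{equation*}
-\Delta w \; = \; \frac{\mu_2}{(a^*)^\beta}\, w^{\beta-1}, \qquad w(0)=1.
\end{equation*}

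To close, the strong maximum principle promotes $w$ to a strictly positive solution of the above, and since $1 < \beta-1 < (N+2)/(N-2)$, the Gidas--Spruck Liouville theorem forces $w\equiv 0$, contradicting $w(0)=1$. The main delicate point I anticipate is ensuring the pointwise convergence of the rescaled nonlinearity to the pure power in regions where $w$ may be small; I would handle this by partitioning according to whether $\phi^{-1}(M_n w_n) \ge R$ or $\le R$ for a parameter $R$ eventually sent to infinity, using the large-argument asymptotics on the first set and the uniform bound $f(t)/M_n^{\beta-1}\to 0$ for bounded $t$ on the second, exactly as in the companion estimate Lemma \ref{lemma3.1}.
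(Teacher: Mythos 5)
Your proposal is correct and follows essentially the same route as the paper: the identical rescaling $\tilde v_n(x)=\|v_n\|_\infty^{-1}v_n(x/\|v_n\|_\infty^{(\beta-2)/2})$, an $L^\infty$ bound on the right-hand side via $(F_2)$, $C^2_{\loc}$ convergence to a bounded non-negative solution of $-\Delta w=\mu_2(a^*)^{-\beta}w^{\beta-1}$, and a subcritical Liouville theorem (the paper cites \cite[Theorem 2.2-(ii)]{A10}, which is the Gidas--Spruck result you invoke) to contradict $w(0)=1$. Your additional care about the pointwise identification of the limit nonlinearity where $w$ is small is a welcome refinement of a step the paper passes over silently.
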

		
		\begin{proof}
			We prove it by contradiction and suppose
			\begin{equation*}
				\lim\limits_{n \to +\infty}\frac{\left\|v_{n}\right\|_{\infty}^{\beta-2}}{\lambda_{n}}=+\infty .
			\end{equation*}
Set $k=\frac{\beta-2 }{2}$ and
			\begin{equation*}
				 \tilde{{v}_{n}} (x)=\frac{1}{\left\|v_{n}\right\|_{\infty}} v_{n}\left(\frac{x}{\left\|v_{n}\right\|_{\infty}^{k}}\right),
			\end{equation*}
then $\tilde{{v}_{n}} \left(0\right)=\left\|\tilde{{v}_{n}}\right\|_{\infty}=1$ and
			\begin{equation*}
				-\Delta \tilde{{v}_{n}}(x)=-\frac{1}{\left\|v_{n}\right\|_{\infty}^{1+2 k}} \Delta v_{n}\left(\frac{x}{\left\|v_{n}\right\|_{\infty}^{k}}\right),
			\end{equation*}
				then
				\begin{equation}\label{o101}
					-\Delta \tilde{{v}_{n}}(x)=\frac{1}{\left\|v_{n}\right\|_{\infty}^{1+2 k}}\left[\frac{f\left(\phi^{-1}\left(\tilde{{v}_{n}}(x)\left\|v_{n}\right\|_{\infty}\right)\right)}{\varphi\left(\phi^{-1}\left(\tilde{{v}_{n}}(x)\left\|v_{n}\right\|_{\infty}\right)\right)}-\lambda_{n} \frac{\phi^{-1}\left(\tilde{{v}_{n}}(x)\left\|v_{n}\right\|_{\infty}\right)}{\varphi\left(\phi^{-1}\left(\tilde{{v}_{n}}(x)\left\|v_{n}\right\|_{\infty}\right)\right)}\right],
				\end{equation}
By $(F_2)$, the right hand side of (\ref{o101}) is in $L^{\infty} \left(\mathbb{R}^{N}\right)$. Up to a subsequence, we may suppose that $\tilde{v_{n} }  \to \tilde{v} $ in $C_{loc}^{2} \left ( \mathbb{R} ^{N}  \right )$ and then $\tilde{v}$  is a non-negative bounded solution to
		 \begin{equation*}
		 	-\Delta \tilde{v} =\mu_2(a^\ast)^{-\beta}\tilde{v}^{\beta -1}  \text { in } \mathbb{R}^{N}.
		 \end{equation*}
As a consequence of \cite[Theorem 2.2-(ii)]{A10}, $\tilde{v}\equiv0$, which contradicts $\tilde{v}\left(0\right)= 1$.
		\end{proof}
		
		\begin{lemma}\label{s1}
			Let $\left\{v_{n}\right\}_{n=1}^{\infty} \subset \mathcal{S}$ with $\lambda=\lambda_{n} \rightarrow+\infty$. Define
			\begin{equation}\label{o12}
				w_{n}(x):=\lambda_{n}^{\frac{1}{2-\beta}} v_{n}\left(\frac{x}{\sqrt{\lambda_{n}}}\right),
			\end{equation}
then, up to a subsequence, $w_{n} \to V^\ast$ in $C_{r, 0} \left ( \mathbb{R} ^{N}  \right ) $ and $H^{1}\left(\mathbb{R}^{N}\right)$ as $n \to +\infty$,
where $V^\ast=a^\ast V(\frac{\cdot}{a^\ast})$ and $V$ is the unique positive solution of \eqref{k5}.

		\begin{proof}
			By Lemma \ref{11} and Lemma \ref{12}, we have that
			\begin{equation*}
					0<\liminf _{n \rightarrow+\infty} \frac{v_{n}\left(0\right)^{\beta-2}}{\lambda_{n}} \le	\limsup\limits_{n \to +\infty}\frac{v_{n}\left(0\right)^{\beta -2}  }{\lambda _{n} }< +\infty,
			\end{equation*}
			which implies $\left \{ w_{n} \right \}$ is uniformly bounded in $L^{\infty} \left(\mathbb{R}^{N}\right)$.
Since $w_{n}$ satisfies
			\begin{equation}\label{z11}
				-\Delta w_{n}+w_{n}=\frac{g_{\lambda_{n}}\left(\lambda_{n}^{\frac{1}{\beta-2}} w_{n}\right)}{\lambda_{n}^{\frac{\beta-1}{\beta-2}}} \text { in } \mathbb{R}^{N},
			\end{equation}
that is,
		\begin{equation}\label{o13}
		-\Delta w_{n}=\lambda_{n}^{\frac{1-\beta}{\beta-2}} \frac{f\left(\phi^{-1}\left(\lambda_{n} ^\frac{1}{\beta^{-2}} w_{n}\right)\right)}{\varphi\left(\phi^{-1}\left(\lambda_{n}^{\frac{1}{\beta^{-2}}} w_{n}\right)\right)}-\lambda_{n}^{\frac{1}{2-\beta}} \frac{\phi^{-1}\left(\lambda_{n}^{\frac{1}{\beta^{-2}}} w_{n}\right)}{\varphi\left(\phi^{-1}\left(\lambda_{n}^{\frac{1}{\beta-2}} w_{n}\right)\right)} .
		\end{equation}
%
Thanks to $(F_2)$, one can check that the right hand of (\ref{o13}) is in $L^{\infty}\left(\mathbb{R}^{N}\right)$. By the elliptic regularity, up to a subsequence,
 we assume that $w_{n} \rightarrow V^\ast$  in  $C_{ loc }^{2}\left(\mathbb{R}^{N}\right) $, where $V^\ast$ satisfies
 $$
-\Delta V^\ast+(a^\ast)^{-2}V^\ast=\mu_2 (a^\ast)^{-\beta}(V^\ast)^{\beta-1}~\hbox{in}~\RN,\,\,V^\ast(0)=\max_{x\in\mathbb{R}^N}V^\ast(x).
$$	
Similarly to \cite{A10}, the result desired can be obtained.
		\end{proof}
		\end{lemma}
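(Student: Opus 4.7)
The plan is to exploit the two-sided bound $\|v_n\|_\infty^{\beta-2}\sim\lambda_n$ from Lemmas \ref{11}--\ref{12}, use the scaling \eqref{o12} to extract a blow-up profile in $C^2_{\mathrm{loc}}$, identify the limit via the uniqueness in \eqref{k5}, and then promote local convergence to $C_{r,0}$ and $H^1$ via a uniform exponential decay estimate.

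\textbf{Step 1 (bounds and local compactness).} Since $\|w_n\|_\infty=\lambda_n^{1/(2-\beta)}\|v_n\|_\infty$, combining Lemmas \ref{11} and \ref{12} gives constants $0<c_1\le c_2<\infty$ such that $c_1\le w_n(0)=\|w_n\|_\infty\le c_2$ for all large $n$. For $|x|$ in any compact set, put $s_n(x):=\lambda_n^{1/(\beta-2)}w_n(x)$, which tends to $+\infty$ wherever $w_n$ stays away from $0$, so by $(\varphi_1)$ one has $\phi^{-1}(s_n)/s_n\to 1/a^\ast$ and $\varphi(\phi^{-1}(s_n))\to a^\ast$. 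Substituting into \eqref{o13} and using $(F_2)$ shows the right-hand side is uniformly bounded in $L^\infty$ and converges locally uniformly to $\mu_2(a^\ast)^{-\beta}(V^\ast)^{\beta-1}-(a^\ast)^{-2}V^\ast$ along any subsequential limit $V^\ast$. Standard Schauder estimates then produce $w_n\to V^\ast$ in $C^2_{\mathrm{loc}}(\RN)$, with $V^\ast$ radial, nonnegative, and $V^\ast(0)\ge c_1>0$, solving the limit equation displayed in the excerpt.

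\textbf{Step 2 (identification of $V^\ast$).} Set $W(y):=V^\ast(a^\ast y)/a^\ast$. A direct computation shows $-\Delta W+W=\mu_2 W^{\beta-1}$, and by the uniqueness of the positive radial solution of \eqref{k5} one has $W=V$, i.e. $V^\ast=a^\ast V(\cdot/a^\ast)$. The centering at the origin is inherited from $w_n(0)=\|w_n\|_\infty$.

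\textbf{Step 3 (upgrade to $C_{r,0}$ and $H^1$).} The decisive point is a uniform exponential tail estimate $w_n(x)\le Ce^{-\mu|x|}$ with $C,\mu>0$ independent of $n$. Since $w_n$ is radially nonincreasing and $\|w_n\|_\infty$ is bounded, Strauss's radial lemma gives $w_n(r)\to 0$ as $r\to\infty$, uniformly on the sublevel sets. On $\{w_n\le \rho\}$ for $\rho>0$ small (independent of $n$) the nonlinear term in \eqref{z11} is bounded by $\tfrac{1}{2}w_n$, using $(F_2)$ at the origin together with $\phi^{-1}(t)=t+O(t^3)$ and $\varphi(0)=1$, so that $-\Delta w_n+\tfrac{1}{2}w_n\le 0$ outside a ball $B_{R_0}$ whose radius $R_0$ is uniform in $n$. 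Comparison with the radial supersolution $x\mapsto C e^{-|x|/\sqrt{2}}$ yields the claimed decay. The same decay holds for $V^\ast$. Dominated convergence combined with Step 1 then gives $w_n\to V^\ast$ in $L^p(\RN)$ for every $p\in[2,\infty]$ and in $C_{r,0}(\RN)$; testing \eqref{z11} against $w_n-V^\ast$, subtracting the limit equation and passing to the limit produces $\|\nabla(w_n-V^\ast)\|_2\to 0$, finishing the $H^1$ convergence.

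The main obstacle is Step 3: the nonlinearity in \eqref{z11} depends on $n$, so one must show its behavior near $w_n=0$ is dominated \emph{uniformly in $n$} by the linear term. This reduces to the expansions of $\phi^{-1}$ and $\varphi$ near $0$ together with $(F_2)$, which are all in hand; the remaining analytical input is a routine radial comparison-principle argument.
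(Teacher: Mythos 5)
Your overall route coincides with the paper's: the two-sided bound from Lemmas \ref{11}--\ref{12} gives the uniform $L^\infty$ bound on $w_n$, elliptic regularity gives $C^2_{\mathrm{loc}}$ convergence to a solution of the limit equation, and the rescaling $W=V^\ast(a^\ast\cdot)/a^\ast$ identifies the limit via the uniqueness in \eqref{k5}. The paper stops there and defers the upgrade to $C_{r,0}$ and $H^1$ to \cite{A10}; you attempt to supply it, which is the right instinct, but your Step 3 contains a genuine error as written.

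The claim that on $\{w_n\le\rho\}$ the right-hand side of \eqref{z11} is bounded by $\tfrac12 w_n$, justified ``using $(F_2)$ at the origin together with $\phi^{-1}(t)=t+O(t^3)$'', fails for two reasons. First, the argument fed to $\phi^{-1}$ and $f$ is $s_n=\lambda_n^{1/(\beta-2)}w_n$, which near the inner boundary of $\{w_n\le\rho\}$ is of order $\rho\,\lambda_n^{1/(\beta-2)}\to+\infty$, so the near-zero expansions are simply not applicable there. Second, writing the right-hand side of \eqref{z11} as $w_n\bigl[\tfrac{f(\phi^{-1}(s_n))}{\lambda_n s_n\varphi(\phi^{-1}(s_n))}+1-\tfrac{\phi^{-1}(s_n)}{s_n\varphi(\phi^{-1}(s_n))}\bigr]$, the bracket tends to $1-(a^\ast)^{-2}$ plus a small term as $s_n\to\infty$, which exceeds $\tfrac12$ whenever $a^\ast>\sqrt2$; this is consistent with the limit equation, whose linear coefficient is $(a^\ast)^{-2}$, not $1$. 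The correct and still elementary fix: since $\phi(t)\le t\varphi(t)$ and $1\le\varphi\le a^\ast$, the mass term satisfies $\lambda_n^{1/(2-\beta)}\phi^{-1}(s_n)/\varphi(\phi^{-1}(s_n))\ge(a^\ast)^{-2}w_n$ everywhere, while $f(\phi^{-1}(s_n))\le C\phi^{-1}(s_n)\max\{1,(\phi^{-1}(s_n))^{\beta-2}\}$ and $\phi(t)\ge t$ give $\lambda_n^{(1-\beta)/(\beta-2)}f(\phi^{-1}(s_n))/\varphi(\phi^{-1}(s_n))\le w_n\bigl(C\lambda_n^{-1}+C\rho^{\beta-2}\bigr)\le\tfrac12(a^\ast)^{-2}w_n$ on $\{w_n\le\rho\}$ for $\rho$ small and $n$ large; hence $-\Delta w_n+\tfrac12(a^\ast)^{-2}w_n\le0$ there, yielding uniform decay $Ce^{-|x|/(a^\ast\sqrt2)}$ rather than $Ce^{-|x|/\sqrt2}$. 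Separately, your appeal to Strauss's lemma presupposes a uniform $H^1$ bound you have not yet established; it is cleaner to get the uniform smallness of the set $\{w_n>\rho\}$ from the radial monotonicity of $w_n$ (Gidas--Ni--Nirenberg, as in the paper's remark) combined with the already-proved $C^2_{\mathrm{loc}}$ convergence to $V^\ast$. With these repairs your Step 3 goes through and the $L^p$ and $H^1$ conclusions follow as you indicate.
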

		
		
Next, we focus on the asymptotic behaviour of
$\left\|\phi ^{-1} \left ( v_{n}\right )\right\|_{2} $ as $n\rightarrow\infty$.

\begin{theorem}\lab{Th2}
		\begin{itemize}
		 \item[(i)] Let $\left\{ v_{n}  \right\}_{n=1}^{\infty} \subset H^{1}\left(\mathbb{R}^{N}\right)$ be positive solutions to (\ref{h2}) with $\lambda=\lambda_{n} \rightarrow 0^{+}$. Then
		 \begin{equation*}
		 	\lim _{n \rightarrow +\infty}\left\|v_{n}\right\|_{\infty}=0, \quad \lim _{n \rightarrow +\infty}\left\|\nabla v_{n}\right\|_{2}=0
		 \end{equation*}
		 and
		 \begin{equation*}
		 	\lim _{n \rightarrow +\infty}\left\|\phi^{-1} \left ( v_{n} \right )\right\|_{2}= \begin{cases}0 & \alpha<2+\frac{4}{N}, \\
		 		\|U\|_{2} & \alpha=2+\frac{4}{N} ,\\
		 		+\infty & \alpha>2+\frac{4}{N}.\end{cases}
		 \end{equation*}
	
	  \item[(ii)]  Let $\left\{ v_{n}  \right\}_{n=1}^{\infty} \subset H^{1}\left(\mathbb{R}^{N}\right)$  be positive solutions to (\ref{h2}) with $\lambda=\lambda_{n} \rightarrow+\infty$. Then
	
	  \begin{equation*}
	  	\lim _{n \rightarrow +\infty}\left\| v_{n} \right\|_{\infty}=+\infty, \quad \lim _{n \rightarrow +\infty}\left\|\nabla v_{n} \right\|_{2}=+\infty
	  \end{equation*}
	  and
	  \begin{equation*}
	  	\lim _{n \rightarrow +\infty}\left\|\phi^{-1} \left ( v_{n} \right )\right\|_{2}= \begin{cases}+\infty & \beta<2+\frac{4}{N}, \\
	  		(a^\ast)^{N/2}\|V\|_{2} & \beta=2+\frac{4}{N},
	  		\\ 0 & \beta>2+\frac{4}{N}.\end{cases}
	  \end{equation*}
		\end{itemize}
		\end{theorem}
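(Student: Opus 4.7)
The plan is based on the scaled profiles $w_n$ introduced in Lemma~\ref{lemma3.1}(iv) (for $\lambda_n\to 0^+$) and in Lemma~\ref{s1} (for $\lambda_n\to+\infty$), which converge strongly in $H^{1}(\mathbb{R}^{N})\cap C_{r,0}(\mathbb{R}^{N})$ to $U$ and to $V^\ast=a^\ast V(\cdot/a^\ast)$, respectively. Since $\|v_n\|_\infty$, $\|\nabla v_n\|_2$ and $\|\phi^{-1}(v_n)\|_2$ can all be rewritten, via change of variable, as explicit $\lambda_n$-powers times the analogous quantities for $w_n$, the claimed three-regime dichotomies will drop out of an elementary scaling computation; the only delicate point is the asymptotic profile of the $\phi^{-1}$-term.

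For part~(i), Lemma~\ref{lemma3.1}(ii)-(iii) give $\|v_n\|_\infty\asymp\lambda_n^{1/(\alpha-2)}\to 0$. A direct change of variable in $w_n(y)=\lambda_n^{1/(2-\alpha)}v_n(y/\sqrt{\lambda_n})$ yields
\[
\|\nabla v_n\|_2^{2}=\lambda_n^{\frac{2}{\alpha-2}+1-\frac{N}{2}}\|\nabla w_n\|_2^{2},\qquad \|v_n\|_2^{2}=\lambda_n^{\frac{2}{\alpha-2}-\frac{N}{2}}\|w_n\|_2^{2}.
\]
The first exponent is strictly positive for $\alpha\in(2,2^{\ast})$ (it vanishes only at $\alpha=2^{\ast}$), so $\|\nabla v_n\|_2\to 0$ using $\|\nabla w_n\|_2\to\|\nabla U\|_2$. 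The second exponent has the sign of $(2+4/N)-\alpha$, which together with $\|w_n\|_2\to\|U\|_2$ gives the desired trichotomy for $\|v_n\|_2$. Since $\phi^{-1}(s)/s\to 1/\varphi(0)=1$ as $s\to 0$ and $\|v_n\|_\infty\to 0$, a standard $\varepsilon$-argument gives $(1-\varepsilon)|v_n|\leq|\phi^{-1}(v_n)|\leq|v_n|$ pointwise for $n$ large, so the limits transfer directly to $\|\phi^{-1}(v_n)\|_2$.

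For part~(ii), Lemmas~\ref{11}-\ref{12} give $\|v_n\|_\infty\asymp\lambda_n^{1/(\beta-2)}\to+\infty$. With $w_n(y)=\lambda_n^{1/(2-\beta)}v_n(y/\sqrt{\lambda_n})\to V^\ast$ the same change of variable gives
\[
\|\nabla v_n\|_2^{2}=\lambda_n^{\frac{2}{\beta-2}+1-\frac{N}{2}}\|\nabla w_n\|_2^{2},
\]
whose exponent is strictly positive for $\beta\in(2,2^{\ast})$, hence $\|\nabla v_n\|_2\to+\infty$. For the $L^{2}$-norm of $\phi^{-1}(v_n)$, a change of variable produces
\[
\|\phi^{-1}(v_n)\|_2^{2}=\lambda_n^{\frac{2}{\beta-2}-\frac{N}{2}}\int_{\mathbb{R}^{N}}\left(\frac{\phi^{-1}\bigl(\lambda_n^{1/(\beta-2)}w_n(y)\bigr)}{\lambda_n^{1/(\beta-2)}}\right)^{2}\mathrm{d}y.
\]
Because $V^\ast>0$ everywhere and $w_n\to V^\ast$ uniformly, $\lambda_n^{1/(\beta-2)}w_n(y)\to+\infty$ pointwise, and combined with $\phi^{-1}(t)/t\to 1/a^\ast$ as $t\to+\infty$ the integrand converges pointwise to $(V^\ast(y)/a^\ast)^{2}$. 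The bound $|\phi^{-1}(s)|\leq|s|$ (valid since $\varphi\geq\varphi(0)=1$) dominates the integrand by $w_n(y)^{2}$, and $w_n\to V^\ast$ in $L^{2}$ gives $w_n^{2}\to(V^\ast)^{2}$ in $L^{1}$. The generalized dominated convergence theorem (Vitali) then yields that the integral tends to $(a^\ast)^{-2}\|V^\ast\|_2^{2}=(a^\ast)^{N}\|V\|_2^{2}$, and the sign of $\tfrac{2}{\beta-2}-\tfrac{N}{2}$ produces the stated three-case behaviour, with the sharp constant $(a^\ast)^{N/2}\|V\|_2$ arising exactly in the mass-critical case $\beta=2+4/N$.

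The main obstacle is the treatment of the $\phi^{-1}$-integral in case~(ii): one must lift the pointwise asymptotic $\phi^{-1}(s)/s\to 1/a^\ast$ at infinity into an integral statement despite the simultaneous rescaling by $\lambda_n$ of both the argument and the normalisation. The $L^{2}$-control of $w_n$ via Lemma~\ref{s1} combined with the universal bound $|\phi^{-1}(s)|\leq|s|$ supplies precisely the $L^{1}$-domination required to run Vitali; the remainder of the proof is essentially a bookkeeping of scaling exponents.
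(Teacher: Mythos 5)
Your proposal is correct and follows essentially the same route as the paper: rescale to the profiles $w_n$ of Lemma~\ref{lemma3.1}(iv) and Lemma~\ref{s1}, use their convergence to $U$ and $V^\ast$, and read off the trichotomy from the exponent $\tfrac{2}{\alpha-2}-\tfrac{N}{2}$ (resp.\ $\tfrac{2}{\beta-2}-\tfrac{N}{2}$), with the constant $(a^\ast)^{-2}\|V^\ast\|_2^2=(a^\ast)^N\|V\|_2^2$ emerging exactly as in the paper's computation. Your explicit Vitali/dominated-convergence justification of passing to the limit in the $\phi^{-1}$-integral, and the explicit scaling computation for $\|\nabla v_n\|_2$, fill in steps the paper only asserts or defers to \cite{A10}.
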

\bp Similarly to \cite{A10}, one can show the asymptotic behaviour of $\| v_{n} \|_{\infty}$ and $\| \nabla v_{n} \|_{2}$. In the following, we only consider
$\left\|\phi ^{-1} \left ( v_{n}\right )\right\|_{2}^{2} $.

		\begin{itemize}
			\item [(i)]
			 Let $w_{n}$ be defined by (\ref{z7}) and by Lemma \ref{lemma3.1}, we have
			\begin{equation*}
				\begin{aligned}
					\left \| \phi^{-1}\left ( v_{n}  \right )\right \|_{2}^{2}&=\int_{\mathbb{R}^{N}}^{} \left | \phi ^{-1} \left ( v_{n}\left ( x \right )   \right )  \right | ^{2}\mathrm{d}x\\
					&=\lambda _{n}^{-\frac{N}{2} } \int_{\mathbb{R}^{N}}^{} \left | \phi ^{-1} \left ( v_{n}\left ( \frac{y}{\sqrt{\lambda _{n} } }  \right ) \right ) \right | ^{2}\mathrm{d}y\\
					&=\lambda _{n}^{-\frac{N}{2} } \int_{\mathbb{R}^{N}}^{} \left ( \frac{\phi ^{-1}\left ( v_{n}\left ( \frac{y}{\sqrt{\lambda _{n} } } \right )  \right )  }{v_{n}\left ( \frac{y}{\sqrt{\lambda _{n} } } \right )  }  \right ) ^{2}\left ( v_{n} \left (  \frac{y}{\sqrt{\lambda _{n} } } \right )  \right )  ^{2}\mathrm{d}y \\
					&= \lambda _{n}^{-\frac{N}{2} }  \int_{\mathbb{R}^{N}}^{} \left ( \frac{\phi ^{-1}\left ( v_{n}\left ( \frac{y}{\sqrt{\lambda _{n} } } \right )  \right )  }{v_{n}\left ( \frac{y}{\sqrt{\lambda _{n} } } \right )  }  \right ) ^{2}\left ( \frac{w_{n}\left ( y \right )}{\lambda_{n}^{\frac{1}{2-\alpha } } }  \right )  ^{2}\mathrm{d}y \\
					&= \lambda _{n}^{-\frac{N}{2}+\frac{2}{\alpha-2} } \int_{\mathbb{R}^{N}}^{}\left ( \frac{\phi ^{-1}\left ( v_{n}\left ( \frac{y}{\sqrt{\lambda _{n} } } \right )\right )  }{v_{n}\left ( \frac{y}{\sqrt{\lambda _{n} } } \right )}  \right )^{2} w_{n}^{2}\left ( y \right ) \mathrm{d}y\\
					&= \lambda _{n}^{-\frac{N}{2}+\frac{2}{\alpha-2} } (\left \| U  \right \| _{2}^{2}+o_n(1)),
				\end{aligned}
			\end{equation*}
which yields the result desired.
			
			\item [(ii)]
			For the case of $\lambda = \lambda_{n} \to +\infty$, let $w_{n}$ be defined by (\ref{o12}). Since
			\begin{equation*}
				\lim_{s \to +\infty} \frac{\phi ^{-1} \left ( s \right ) }{s} =\lim_{t \to +\infty} \frac{t}{\phi \left ( t \right ) }=\frac{1}{a^\ast}  ,
			\end{equation*}
by Lemma \ref{s1},
			\begin{equation*}
				\begin{aligned}
					\left \| \phi^{-1}\left ( v_{n}  \right )\right \|_{2}^{2}
					=& \lambda _{n}^{-\frac{N}{2} } \int_{\mathbb{R}^{N}}^{} \left ( \frac{\phi ^{-1}\left ( v_{n}\left ( \frac{y}{\sqrt{\lambda _{n} } } \right )  \right )  }{v_{n}\left ( \frac{y}{\sqrt{\lambda _{n} } } \right )  }  \right ) ^{2}\left ( v_{n} \left (  \frac{y}{\sqrt{\lambda _{n} } } \right )  \right )  ^{2}\mathrm{d}y \\
					&= \lambda _{n}^{-\frac{N}{2}+\frac{2}{\beta-2} } \int_{\mathbb{R}^{N}}^{}\left ( \frac{\phi ^{-1}\left ( v_{n}\left ( \frac{y}{\sqrt{\lambda _{n} } } \right )\right )  }{v_{n}\left ( \frac{y}{\sqrt{\lambda _{n} } } \right )}  \right )^{2} w_{n}^{2}\left ( y \right ) \mathrm{d}y\\
					&= \lambda _{n}^{-\frac{N}{2}+\frac{2}{\beta-2} } ((a^\ast)^{-2}\left \| V^\ast  \right \| _{2}^{2}+o_n(1)),
				\end{aligned}
			\end{equation*}
which yields the result desired. 	
		\end{itemize}			
\ep

\s{Uniqueness for $\lambda$ small or large}
		\renewcommand{\theequation}{4.\arabic{equation}}
		\hspace*{0.5cm}
		In this section, we prove the uniqueness of positive solutions to (\ref{h2}) provided $\lambda> 0$ small or large enough.
		\begin{theorem}\label{unique}
			 Let $N \geq 3$ and assume that $\left(\varphi_{0}\right)$-$\left(\varphi_{1}\right)$, $\left(F_{1}\right)$-$\left(F_{2}\right)$ hold.
Then (\ref{h2}) has at most one positive solution if $\lambda>0$ large and in addition $\left(\varphi_{2}\right)$ holds or if $\lambda>0$ small and in addition $\left(F_{3}\right)$ holds.
		\end{theorem}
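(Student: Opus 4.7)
The plan is to argue by contradiction via a blow-up and linearization scheme, in which the non-degeneracy of $U$ (respectively $V$) in $H^1_{\mathrm{rad}}(\mathbb{R}^N)$ provided by \cite[Proposition 2.1]{A10} plays the decisive role. Suppose, for contradiction, that there exist $\lambda_n \to 0^+$ (case A, using $(F_3)$) or $\lambda_n \to +\infty$ (case B, using $(\varphi_2)$) such that at each $\lambda_n$ problem (\ref{h2}) admits two distinct positive solutions $v_n^{(1)} \neq v_n^{(2)}$. By the remark following (\ref{o10}), each $v_n^{(i)}$ is radial, radially decreasing about some point and decays exponentially, so we may center both at the origin. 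In case A, set $w_n^{(i)}(x) := \lambda_n^{1/(2-\alpha)} v_n^{(i)}(x/\sqrt{\lambda_n})$: Lemma \ref{lemma3.1} gives $w_n^{(1)}, w_n^{(2)} \to U$ in $H^1(\mathbb{R}^N) \cap C_{r,0}(\mathbb{R}^N)$. In case B, use the scaling (\ref{o12}) and Lemma \ref{s1} to get $w_n^{(i)} \to V^*$ in the same sense.

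Introduce the normalized difference $\tilde\zeta_n := (w_n^{(1)} - w_n^{(2)})/\|w_n^{(1)} - w_n^{(2)}\|_\infty$, so that $\|\tilde\zeta_n\|_\infty = 1$. Subtracting the rescaled equations satisfied by $w_n^{(1)}$ and $w_n^{(2)}$ yields a linear equation
$$-\Delta \tilde\zeta_n + \tilde\zeta_n = H_n(x)\,\tilde\zeta_n \quad \text{in } \mathbb{R}^N,$$
where $H_n$ is the mean-value coefficient
$$H_n(x) = \int_0^1 \frac{d}{ds}\Bigl[\lambda_n^{-(\alpha-1)/(\alpha-2)}\, g_{\lambda_n}\bigl(\lambda_n^{1/(\alpha-2)}(w_n^{(2)}(x) + s(w_n^{(1)}(x)-w_n^{(2)}(x)))\bigr)\Bigr]\,ds$$
in case A, and the analogous expression with $\beta$ in case B. Using the uniform convergence of $w_n^{(i)}$ to the common profile, together with $(F_2)$ and $(\varphi_0)$-$(\varphi_1)$ (and $(\varphi_2)$ in case B to tame $t\varphi'(t)$ at infinity), one verifies that $H_n \to \mu_1(\alpha-1)U^{\alpha-2}$ in case A, respectively $H_n \to \mu_2(\beta-1)(a^*)^{-\beta}(V^*)^{\beta-2}$ in case B, locally uniformly in $\mathbb{R}^N$. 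Elliptic regularity, combined with $\|\tilde\zeta_n\|_\infty = 1$, yields a subsequential limit $\tilde\zeta \in C^2_{\mathrm{loc}}(\mathbb{R}^N)$ that is radial, bounded and solves the linearized equation at $U$ (respectively $V^*$). Non-degeneracy of $U$ (respectively $V$) in $H^1_{\mathrm{rad}}$ then forces $\tilde\zeta \equiv 0$.

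The contradiction is delivered by locating the maximum of $|\tilde\zeta_n|$. Choose $x_n \in \mathbb{R}^N$ with $|\tilde\zeta_n(x_n)| = 1$. If $|x_n|$ stays bounded along a subsequence, local uniform convergence gives a point $x^*$ with $|\tilde\zeta(x^*)| = 1$, violating $\tilde\zeta \equiv 0$. The remaining case $|x_n| \to +\infty$ is excluded by a uniform exponential tail bound on $\tilde\zeta_n$. The main technical obstacle is exactly producing this bound: because $U$ (respectively $V^*$) decays exponentially, $H_n \to 0$ uniformly in $n$ outside a sufficiently large ball $B_R$, so on $\mathbb{R}^N \setminus B_R$ the function $\tilde\zeta_n$ satisfies $-\Delta\tilde\zeta_n + (1-H_n)\tilde\zeta_n = 0$ with $1-H_n \geq 1/2$; a comparison argument against a radial exponential barrier then yields $|\tilde\zeta_n(x)| \leq C e^{-|x|/3}$ uniformly in $n$, ruling out the escape scenario.

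Finally, $(F_3)$ in case A enters indirectly, having already been invoked in Lemma \ref{lemma3.1} to exclude alternative blow-up profiles corresponding to positive radial decreasing classical solutions of the limit equation $-\mathrm{div}(\varphi^2(u)\nabla u) + \varphi(u)\varphi'(u)|\nabla u|^2 = f(u)$, thereby ensuring that \emph{both} rescaled sequences $w_n^{(i)}$ converge to the \emph{same} profile $U$; analogously $(\varphi_2)$ is what allows Lemma \ref{s1} to identify the common limit $V^*$ in case B. Without the condition that the two sequences share the same blow-up limit, the linearized argument above collapses, since the difference $\tilde\zeta_n$ need not be small in any sense.
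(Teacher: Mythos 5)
Your proposal follows essentially the same route as the paper's proof: assume two families of positive solutions, rescale as in Lemma \ref{lemma3.1} (resp.\ Lemma \ref{s1}), normalize the difference, pass to the limit in the linearized equation, and use the non-degeneracy of $U$ (resp.\ $V$) to force the limit to vanish, contradicting $\|\tilde\zeta_n\|_\infty=1$ via control of where the maximum sits. The differences are cosmetic: you write the mean-value coefficient in integral rather than pointwise form and you spell out the exponential-barrier step that the paper delegates to \cite{A10}; note only that in case B your stated limit of $H_n$ omits the linear term $1-(a^*)^{-2}$ coming from the normalization $-\Delta w_n+w_n=\cdots$, although the resulting linearized equation $-\Delta\xi+(a^*)^{-2}\xi=(\beta-1)\mu_2(a^*)^{-\beta}(V^*)^{\beta-2}\xi$ is the correct one.
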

		
		\begin{proof}
		We adopt the idea in \cite{A10} to give the proof.

If $\lambda>0$ small and $\left(F_{1}\right)$-$\left(F_{3}\right)$ hold, assume that problem (\ref{h2}) admits two families of positive solutions
$u_{\lambda}^{(1)}$ and $u_{\lambda}^{(2)}$ with $\lambda\rightarrow 0^+$. Let
$$
  v_{\lambda}^{(i)}(\cdot)
	 :=\lambda^{-\frac{1}{\alpha-2}}u_{\lambda}^{(i)}
	       \left(\cdot/\sqrt{\lambda}\right),\quad i=1,2.
$$
Then by Lemma \ref{lemma3.1},
$v_{\lambda}^{(1)},
 v_{\lambda}^{(2)}\in H_{rad}^{1}(\RN)$
satisfy
$$-\Delta v+v=\frac{g_\lambda(\lambda^{\frac{1}{\alpha-2}}v)}{\lambda^{\frac{\alpha-1}{\alpha-2}}}\,\mbox{in}\,\,\mathbb{R}^N $$
and as $\lambda\rightarrow 0^+$,
$v_{\lambda}^{(i)}\rightarrow U\;\hbox{ both in}\;C_{r,0}(\RN)~\hbox{and in}~H^1(\RN), \;i=1,2.$
Setting
$$
\xi_\lambda:=\frac{v_{\lambda}^{(1)}-v_{\lambda}^{(2)}}{\left\|v_{\lambda}^{(1)}-v_{\lambda}^{(2)}\right\|_{\infty}},
$$
for any $x \in \RN$, there exists some $\theta(x)\in [0,1]$ such that
\begin{equation*}
-\Delta \xi_\lambda +\xi_\lambda=\lambda^{-1}g_\lambda'\left(\lambda^{\frac{1}{\alpha-2}}[\theta(x)v_{\lambda}^{(1)}(x)
+(1-\theta(x))v_{\lambda}^{(2)}(x)]\right)  \xi_\lambda\,\,\,\mbox{in}\,\,\mathbb{R}^N ,
\end{equation*}
where for any $t\in\mathbb{R}$,
\begin{align*}
\lambda^{-1}g_{\lambda}'(t)=&\frac{f'(\phi^{-1}(t))\varphi(\phi^{-1}(t))-f(\phi^{-1}(t))\varphi'(\phi^{-1}(t))}{\lambda[\varphi(\phi^{-1}(t))]^3}\\
&-\frac{\varphi(\phi^{-1}(t))-\phi^{-1}(t)\varphi'(\phi^{-1}(t))}{[\varphi(\phi^{-1}(t))]^3}+1.
\end{align*}
Thanks to $v_{\lambda}^{(i)}\rightarrow U$ in $C_{r,0}(\RN)$, one can see that
$$
\lim_{\lambda\rightarrow0^+}\lambda^{\frac{1}{\alpha-2}}[\theta(x)v_{\lambda}^{(1)}(x)
+(1-\theta(x))v_{\lambda}^{(2)}(x)]=0\,\,\,\mbox{uniformly for}\,\,x\in\mathbb{R}^N,
$$
which implies that for any $x \in \RN$,
$$
\lim_{\lambda\rightarrow0^+}\lambda^{-1}g_\lambda'\left(\lambda^{\frac{1}{\alpha-2}}[\theta(x)v_{\lambda}^{(1)}(x)
+(1-\theta(x))v_{\lambda}^{(2)}(x)]\right)=\mu_1(\alpha-1)U^{\alpha-2}.
$$
Hence, up to a subsequence, $\xi_\lambda\rightarrow \xi$ in $C_{loc}^{2}(\RN)$ and $\xi$ is a radial bounded solution of
$$
-\Delta \xi+\xi=(\alpha-1)\mu_1 U^{\alpha-2}\xi.
$$
Similarly to \cite{A10}, we can get one contradiction.
\vskip0.1in
Now, we consider the case of $\lambda >0$ large and assume $\left(\varphi_{2}\right)$ holds. Assume problem (\ref{h2}) admits two families of positive solutions
 $u_{\lambda}^{(1)}$ and $u_{\lambda}^{(2)}$ with $\lambda\rightarrow +\infty$. Let
$$
  v_{\lambda}^{(i)}(\cdot)
	 :=\lambda^{-\frac{1}{\beta-2}}u_{\lambda}^{(i)}
	       \left(\cdot/\sqrt{\lambda}\right),\quad i=1,2,
$$
and
$$
\xi_\lambda:=\frac{v_{\lambda}^{(1)}-v_{\lambda}^{(2)}}{\left\|v_{\lambda}^{(1)}-v_{\lambda}^{(2)}\right\|_{\infty}}.
$$
Similarly as above, there exists some $\theta(x)\in [0,1]$ such that
\begin{equation*}
-\Delta \xi_\lambda +\xi_\lambda =\lambda^{-1}g'\left(\lambda^{\frac{1}{\beta-2}}[\theta(x)v_{\lambda}^{(1)}(x)+(1-\theta(x))v_{\lambda}^{(2)}(x)]\right) \xi_\lambda~\hbox{in}~\RN.
\end{equation*}
By Lemma \ref{s1}, $v_{\lambda}^{(i)}\rightarrow V$ in $C_{r,0}(\RN)$ as $\lambda\rightarrow+\infty$ for $i=1,2$. It is easy to know that, for any $x\in\mathbb{R}^N$,
$$
\lim_{\lambda\rightarrow+\infty}\lambda^{\frac{1}{\beta-2}}[\theta(x)v_{\lambda}^{(1)}(x)
+(1-\theta(x))v_{\lambda}^{(2)}(x)]=+\infty.
$$
By $\left(\varphi_{2}\right)$, for any $x\in\mathbb{R}^N$,
$$
\lim_{\lambda\rightarrow+\infty}\left.\frac{\varphi(\phi^{-1}(t))-\phi^{-1}(t)\varphi'(\phi^{-1}(t))}{[\varphi(\phi^{-1}(t))]^3}\right|_{t=\lambda^{\frac{1}{\beta-2}}[\theta(x)v_{\lambda}^{(1)}(x)
+(1-\theta(x))v_{\lambda}^{(2)}(x)]}=(a^\ast)^{-2}
$$
and
$$
\lim_{\lambda\rightarrow+\infty}\left.\frac{f'(\phi^{-1}(t))\varphi(\phi^{-1}(t))-f(\phi^{-1}(t))\varphi'(\phi^{-1}(t))}{\lambda[\varphi(\phi^{-1}(t))]^3}\right|_{t=\lambda^{\frac{1}{\beta-2}}[\theta(x)v_{\lambda}^{(1)}(x)
+(1-\theta(x))v_{\lambda}^{(2)}(x)]}=\mu_2 (a^\ast)^{-\beta}V^{\beta-2}.
$$
Hence, up to a subsequence, $\xi_\lambda \rightarrow \xi$ in $C_{loc}^{2}(\RN)$ and $\xi$ is a radial bounded solution of
$$
-\Delta \xi+(a^\ast)^{-2}\xi=\mu_2 (a^\ast)^{-\beta}V^{\beta-2}\xi.
$$
Similarly, we get a contradiction.
		\end{proof}

		\vskip0.1in


		\s{Proof of Theorem \ref{Th1}}
		\renewcommand{\theequation}{5.\arabic{equation}}
		\hspace*{0.5cm}
		\noindent
We give the proof of Theorem \ref{Th1}. Being similar to that of \cite{A10}, we only sketch it.
\smallskip

$\bullet$ {\bf Step 1.} For any $\lambda>0$, problem (\ref{h2}) admits a positive and radially symmetric solution $u_\lambda$. By Theorem \ref{unique}, there exists $\lambda_0>0$ such that, up to
translation, $u_\lambda$ is the unique positive solution of problem (\ref{h2}) and then is of mountain-pass-type (see \cite[Definition 2.1]{A10}). Furthermore,  the map $\lambda\mapsto u_\lambda, \lambda\in (0,\lambda_0)$
is continuous. That is, $\left\{(\lambda, u_\lambda):\lambda\in (0,\lambda_0)\right\}$ is a curve in $\mathbb{R}\times H_{rad}^{1}(\RN)$.
\vskip0.1in
$\bullet$  {\bf Step 2.} Let $\widetilde{\mathcal{S}} \subset \mathcal{S}$ be the connected component of $\mathcal{S}$ containing the curve
$\left\{(\lambda, u_\lambda):\lambda\in (0,\lambda_0)\right\}$, where $\mathcal{S}$ is given in Section 2. Denoting by
$P_{1}:(0,+\infty) \times H_{\text {rad }}^{1}\left(\mathbb{R}^{N}\right) \rightarrow(0,+\infty)$ the projection onto the $\lambda$-component,
we show that $P_{1}(\widetilde{\mathcal{S}})=(0,+\infty)$.

For any $v\in H_{\text {rad }}^{1}(\mathbb{R}^{N})$, define
		$$
		\mathbb{T}_{\lambda}(v)=:(-\Delta+\lambda)^{-1} g_\lambda(v).
		$$
Then $\mathbb{T}_\lambda: H_{rad}^{1}(\RN)\rightarrow H_{rad}^{1}(\RN)$ is completely continuous and $u$ is a radial solution of problem (\ref{h2})
if and only if $u$ is a fixed point of $\mathbb{T}_{\lambda}$ in $H_{\text {rad }}^{1}(\mathbb{R}^{N})$. For $\lambda\in (0,\lambda_0)$, thanks to Lemma \ref{lemma2.1}-$(v)$, similarly to \cite[Lemma 7.2]{A10}, one can get the local fixed point index
$$
{\rm ind}(\mathbb{T}_\lambda, u_\lambda)={\rm deg}_{LS}\left({\rm Id}-\mathbb{T}_\lambda, N_\varepsilon(u_\lambda), 0\right)=-1,
$$
where $\varepsilon>0$ is small, $N_\varepsilon$ denotes the $\varepsilon$-neighborhood in $H^1_{rad}(\RN)$.
		
\vskip0.1in
$\bullet$ {\bf Step 3.} For any fixed $0<a<b$, define
\begin{equation*}
			\begin{aligned}
				\mathcal{S}(a,b)= \Biggl\{&\left ( \lambda ,  \, v_{\lambda }  \right ) \in \left [ a,  \, b \right ]\times H_{rad}^{1}\left ( \mathbb{R} ^{N}  \right ) : v_\lambda>0, \left ( \lambda ,  \, v_{\lambda }\right ) \, \text{solves} \\
				&-\Delta v+\lambda \frac{\phi^{-1} \left ( v \right ) }{\varphi \left (\phi^{-1} \left ( v \right )   \right )}-\frac{f \left (\phi^{-1} \left ( v \right ) \right )}{\varphi \left (\phi^{-1} \left ( v \right  )  \right )} =\,\,\mbox{in}\,\,\mathbb{R}^N     \Biggr\}.
			\end{aligned}	
		\end{equation*}		
By using the similar blow-up technique and ODE approach, one can show that the set $\mathcal{S}(a,b)$ is compact in $C_{r,0}(\RN)$ and $H_{rad}^1(\RN)$. Via the topological degree theory, thanks to the compactness of $\mathcal{S}(a,b)$ and ${\rm ind}(\mathbb{T}_\lambda, u_\lambda)=-1$ for $\lambda\in (0,\lambda_0)$,
there holds that $P_{1}(\widetilde{\mathcal{S}})=(0, \infty)$.

\vskip0.1in
$\bullet$ {\bf Step 4.} Similarly to \cite{A10}, define
			$$
				\tilde \rho: \tilde{\mathcal{S}} \rightarrow \mathbb{R}^{+}, \quad(\lambda, v) \mapsto\|\phi^{-1} \left ( v \right )\|_{2}^{2} .
			$$
We just consider the case $(i)$. By $P_{1}(\widetilde{\mathcal{S}})=(0,+\infty)$ and Theorem \ref{Th2}, if $2<\alpha<2+\frac{4}{N}$, then there exists
$\left(\lambda_{n}, \, v_{n}\right) \subset \widetilde{\mathcal{S}} $ with $\lambda_{n} \rightarrow 0^{+}$ and $\left\|\phi^{-1} \left ( v_{n} \right )\right\|_{2}^{2} \rightarrow 0$.
Similarly, if $2<\beta<2+\frac{4}{N}$, there exists $\left(\lambda_{n}^{\prime},  \, v_{n}^{\prime}\right) \subset \widetilde{\mathcal{S}}$
with $\lambda_{n}' \rightarrow+\infty$ and $\|\phi^{-1} \left ( v_{n}' \right )\|_{2}^{2} \rightarrow+\infty$. Since $\widetilde{\mathcal{S}}$ is connected,
for any given $c>0$, there exists
$(\lambda_c,u_c)\in \widetilde{\mathcal{S}}$ such that $\tilde \rho(u_c)=c$, that is, (\ref{h2}) possesses a positive normalized solution. Since the proof of the other cases can be done similarly as in
\cite{A10}, we omit the details.

		\vskip 5mm
		\noindent\textbf{Conflict of interest.} The authors have no competing interests to declare that are relevant to the content of this article
		
%
		
		\medskip


\begin{thebibliography}{100}
\bibitem{A1}
C. O. Alves, Y. Wang, Y. Shen, Soliton solutions for a class of quasilinear Schr\"odinger equations with a parameter, {\it J. Differ. Equ.}, {\bf259} (2015), 318-343.

\bibitem{Lions} H. Berestycki and P. L. Lions, Nonlinear scalar field equations I. Existence of a ground state, {\it Arch. Ration. Mech. Anal.,} {\bf 82}(1983), 313-346.
		
%

\bibitem{byeon} J. Byeon, L. Jeanjean, Standing waves for nonlinear Schr\"{o}dinger equations with a general nonlinearity , {\it Arch. Ration. Mech. Anal., } {\bf 185}(2007),  185-200.	
\bibitem{A4}
M. Colin, L. Jeanjean, Solutions for quasilinear Schr\"odinger equation: a dual approach, {\it Nonlinear Anal.}, {\bf56}(2004), 213-226.
	
\bibitem{CJS} M. Colin, L. Jeanjean, M. Squassina, Stability and instability results for standing waves of quasilinear Schr\"odinger equations, {\it Nonlinearity}, {\bf23} (2010), 1353-1385.
	
\bibitem{A5}
Y. Deng, S. Peng, S. Yan, Critical exponents and solitary wave solutions for generalized quasilinear Schr\"odinger equations, {\it J. Differ. Equ.}, {\bf260}(2016), 1228-1262.

\bibitem{GNN-1981} B. Gidas, W.M.~Ni, L.~Nirenberg, Symmetry of positive solutions of nonlinear elliptic equations in {${\mathbb{R}}^{n}$}. In {\em Mathematical analysis and applications, {P}art {A}}, volume~7 of {\em Adv. in Math. Suppl. Stud.}, pp. 369--402. Academic Press, New York-London,1981.
		
\bibitem{A6}
E. Gloss, Existence and concentration of positive solutions for a quasilinear equation in $\mathbb{R}^{N}$, {\it J. Math. Anal. Appl.}, {\bf371} (2010), 465-484.
		
		
\bibitem{A8}
R. Hasse, A general method for the solution of nonlinear soliton and kink Schr\"odinger equations, {\it  Z. Physik B}, {\bf37} (1980), 83-87.
	
\bibitem{hzz} C. Huang, J. Zhang, X. Zhong, Existence and multiplicity of solutions for general quasi-linear elliptic equations with sub-cubic nonlinearities, {\it J. Math.Anal.Appl.}, {\bf531}(2024), 127880.

\bibitem{Jeanjean1997}
L. Jeanjean, Existence of solutions with prescribed norm for semilinear elliptic equations, {\em Nonlinear Anal.}, {\bf28}(1997), no. 10, 1633--1659.	

\bibitem{A9}
L. Jeanjean, T. Luo, Z.-Q. Wang, Multiple normalized solutions for quasilinear Schr\"odinger equations, {\it J. Differ. Equ.}, {\bf259}(2015), 3894-3928.
		
\bibitem{A10}
L. Jeanjean, J. J. Zhang, X. X. Zhong, A global branch approach to normalized solutions for the Schr\"odinger equation, {\it J. Math. Pures Appl.}, {\bf183}(2024), 44-75.
		
\bibitem{A11}
S. Kurihura, Large-amplitude quasi-solitons in superfluid films, {\it J. Phys. Soc. Japan}, {\bf50} (1981), 3262-3267.
		
\bibitem{A12}
A. Kosevich, B. Ivanov, A. Kovalev, Magnetic solitons, {\it Phys. Rep.}, {\bf194} (1990), 117-238.

\bibitem{LSS}
E. W. Laedke, K. H. Spatschek, L. Stenflo, Evolution theorem for a class of perturbed envelope soliton solutions, {\it J. Math. Phys.}, {\bf24} (1983), 2764-2769.
	
\bibitem{A13}
A. Litvak, A. Sergeev, One dimensional collapse of plasma waves, {\it JETP Lett.}, {\bf27} (1978) 517-520.
		
\bibitem{A14}
J. Liu, Z.-Q. Wang, Soliton solutions for quasilinear Schr\"odinger equations I, {\it Proc. Amer. Math. Soc.}, {\bf131}(2002), 441-448.
		
\bibitem{A15}
J. Liu, Z.-Q. Wang, Soliton solutions for quasilinear Schr\"odinger equations II, {\it J. Differ. Equ. }, {\bf187} (2003), 473-493.
		
\bibitem{A16}
H. Li, W. Zou, Quasilinear Schr\"odinger equations: ground state and infinitely many normalized solutions, {\it Pacific J. Math.}, {\bf322} (2023), 99-138.

\bibitem{LiuX1} X. Liu, J. Liu, Z.-Q. Wang, Quasilinear elliptic equations via perturbation method, {\it Proc. Am. Math. Soc.}, {\bf141} (2013), 253-263.
	
\bibitem{LiuX3} X. Liu, J. Liu, Z.-Q. Wang, Quasilinear elliptic equations with critical growth via perturbation method, {\it J. Differential Equations}, {\bf254} (2013), 102-124.
	
\bibitem{A17}
V. Makhankov, V. Fedyanin, Nonlinear effects in quasi-one-dimensional models of condensed matter theory, {\it Phys. Rep.}, {\bf104} (1984), 1-86.
				
\bibitem{pellacci}
B. Pellacci, M. Squassina,
Unbounded critical points for a class of lower semicontinuous functionals
{\it J. Differential Equations} {\bf  201} (2004), 25-62.		
		
\bibitem{A18}
G. Quispel, H. Capel, Equation of motion for the Heisenberg spin chain, {\it Phys. A }, {\bf110} (1982), 41-80.
		
\bibitem{A19}
Y. Shen, Y. Wang, Soliton solutions for generalized quasilinear Schr\"odinger equations, {\it Nonlinear Anal.}, {\bf80} (2013), 194-201.

\bibitem{Stuart1981}
C. A. Stuart, Bifurcation from the continuous spectrum in the {$L^2$}-theory of elliptic equations on {${\bf R}^n$}. In {\it Recent methods in nonlinear analysis and applications
 ({N}aples, 1980)}, pages 231--300. Liguori, Naples,1981.

\bibitem{Stuart1989}
C. A. Stuart, Bifurcation from the essential spectrum for some non-compact nonlinearities, {\it Math. Appl. Sci.}, {\bf11}(1989), no. 1, 525-542.
		
\bibitem{A20}
X. Yang, X. Tang, B. Chen, Multiple radial and nonradial normalized solutions for a quasilinear Schr\"odinger equation, {\it J. Math. Anal. Appl.}, {\bf501} (2021), 125122.		
		
\bibitem{A21}
H. Ye, Y. Yu, The existence of normalized solutions for $ L^{2} $-critical quasilinear Schr\"odinger equations, {\it J. Math. Anal. Appl.}, {\bf497}(2021), no.1, 124839.
		
\bibitem{A22}
L. Zhang, Y. Li, Z.-Q. Wang, Multiple normalized solutions for a quasilinear Schr\"odinger equation via dual approach, {\it  Topol. Methods Nonlinear Anal.}, {\bf61}(2023), 465-489.
\end{thebibliography}
\end{document}